\definecolor{verylight}{gray}{0.97}
\definecolor{light}{gray}{0.93}
\definecolor{medium}{gray}{0.82}
\definecolor{dark}{gray}{0.72}
 \def\F{{\mathcal F}}
 \def\C{{\mathcal C}}
 \def\D{{\mathcal D}}
 \def\B{{\mathcal B}}
 \def\A{{\mathcal A}}
 \def\T{{\mathcal T}}
 \def\ab{{\bold a}}
 \def\bb{{\bold b}}
 \def\cb{{\bold c}}
 \def\opn#1#2{\def#1{\operatorname{#2}}} % to make operators
 \opn\chara{char} \opn\length{\ell} \opn\pd{pd} \opn\rk{rk}
 \opn\projdim{proj\,dim} \opn\injdim{inj\,dim} \opn\rank{rank}
 \opn\depth{depth} \opn\grade{grade} \opn\height{height}
 \opn\embdim{emb\,dim} \opn\codim{codim}
 \opn\Tr{Tr} \opn\bigrank{big\,rank}
 \opn\superheight{superheight}\opn\lcm{lcm}
 \opn\trdeg{tr\,deg}%\emph{
 \opn\reg{reg} \opn\lreg{lreg} \opn\ini{in} \opn\lpd{lpd}
 \opn\size{size} \opn\sdepth{sdepth}
 \opn\link{link}\opn\fdepth{fdepth}\opn\lex{lex}
 \opn\div{div} \opn\Div{Div} \opn\cl{cl} \opn\Cl{Cl}
 \opn\Spec{Spec} \opn\Supp{Supp} \opn\supp{supp} \opn\Sing{Sing}
 \opn\Ass{Ass} \opn\Min{Min}\opn\Mon{Mon}
 \opn\Ann{Ann} \opn\Rad{Rad} \opn\Soc{Soc}
 \opn\Im{Im} \opn\Ker{Ker} \opn\Coker{Coker} \opn\Am{Am}
 \opn\Hom{Hom} \opn\Tor{Tor} \opn\Ext{Ext} \opn\End{End}
 \opn\Aut{Aut} \opn\id{id}
 \opn\nat{nat}
 \opn\pff{pf}%   \pf exists already
 \opn\Pf{Pf} \opn\GL{GL} \opn\SL{SL} \opn\mod{mod} \opn\ord{ord}
 \opn\Gin{Gin} \opn\Hilb{Hilb}\opn\sort{sort}
 \opn\aff{aff} \opn\con{conv} \opn\relint{relint} \opn\st{st}
 \opn\lk{lk} \opn\cn{cn} \opn\core{core} \opn\vol{vol}
 \opn\link{link} \opn\star{star}\opn\lex{lex}\opn\set{set}
 \opn\gr{gr}
 \def\pot#1#2{#1[\kern-0.28ex[#2]\kern-0.28ex]}
 \opn\dirlim{\underrightarrow{\lim}}
 \opn\inivlim{\underleftarrow{\lim}}
 \let\union=\cup
 \let\sect=\cap
 \let\Union=\bigcup
 \let\Dirsum=\bigoplus
 \def\Implies{\ifmmode\Longrightarrow \else
         \unskip${}\Longrightarrow{}$\ignorespaces\fi}
 \def\implies{\ifmmode\Rightarrow \else
         \unskip${}\Rightarrow{}$\ignorespaces\fi}
 \def\iff{\ifmmode\Longleftrightarrow \else
         \unskip${}\Longleftrightarrow{}$\ignorespaces\fi}
 \newtheorem{Theorem}{Theorem}[section]
 \newtheorem{Lemma}[Theorem]{Lemma}
 \newtheorem{Corollary}[Theorem]{Corollary}
 \newtheorem{Proposition}[Theorem]{Proposition}
 \let\epsilon\varepsilon
 \let\kappa=\varkappa
 \def\qed{\ifhmode\textqed\fi
       \ifmmode\ifinner\quad\qedsymbol\else\dispqed\fi\fi}
 \def\textqed{\unskip\nobreak\penalty50
        \hskip2em\hbox{}\nobreak\hfil\qedsymbol
        \parfillskip=0pt \finalhyphendemerits=0}
 \def\dispqed{\rlap{\qquad\qedsymbol}}
 \opn\dis{dis}
 \def\pnt{{\raise0.5mm\hbox{\large\bf.}}}
 \opn\Lex{Lex}
\begin{document}

 \title {Squarefree vertex cover algebras}

 \author {Shamila Bayati and Farhad Rahmati}

 \address{Shamila Bayati, Faculty of Mathematics and Computer Science,
Amirkabir University of Technology (Tehran Polytechnic), 424 Hafez Ave., Tehran
15914, Iran}\email{shamilabayati@gmail.com}

\address{Farhad Rahmati, Faculty of Mathematics and Computer Science,
Amirkabir University of Technology (Tehran Polytechnic), 424 Hafez Ave., Tehran
15914, Iran} \email{frahmati@cic.aut.ac.ir}

 \begin{abstract}
In this paper we introduce squarefree vertex cover algebras. We study the question when these algebras coincide with the ordinary vertex cover algebras and when these algebras are standard graded. In this context we exhibit a duality theorem for squarefree vertex cover algebras.
 \end{abstract}

\subjclass{13A30, 05C65}
\keywords{Monomial ideals; Alexander Dual; Vertex Cover algebras; Squarefree Borel ideals}

 \maketitle

 \section*{Introduction}

 The  study of squarefree vertex cover algebras was originally the  motivation  to better understand the Alexander dual of the facet ideals of the skeletons of a simplicial complex. In the special case of  the simplicial complex $\Delta_r(P)$ whose facets correspond to sequences $p_{i_1}\leq p_{i_2}\leq \ldots \leq p_{i_r}$ in a finite poset $P$, it was shown in \cite{VHF} that $I(\Delta_{r}(P)^{(k)})^\vee=(I(\Delta_{r}(P))^\vee)^{\langle d-k\rangle}$; see Section~1 for a detailed explanation of the formula.

 The question arises whether this kind of duality is  valid for more general simplicial complexes. Considering this problem,  it turned out  that this is indeed the case for a pure  simplicial complex $\Delta$ on the vertex set $[n]$, provided  a certain algebra attached to $\Delta$ is standard graded. The algebra in question, which we now  call the squarefree vertex cover algebras of $\Delta$, denoted by $B(\Delta)$, is defined as follows: let  $K$ be a field and $S=K[x_1,\ldots,x_n]$ be the polynomial ring over $K$ in the variables $x_1,\ldots,x_n$. Then $B(\Delta)$ is the graded $S$-algebra generated by the  monomials  $x^\cb t^k\in S[t]$ where the $(0,1)$-vector $\cb$ is a $k$-cover of $\Delta$ in the sense of \cite{HHT}. Thus in contrast to the vertex cover algebra $A(\Delta)$, introduced in \cite{HHT}, whose generators correspond to all $k$-covers, $B(\Delta)$ is generated only by the  monomials corresponding to squarefree $k$-covers, called binary $k$-covers  in \cite{DV}. In particular, $B(\Delta)$ is a graded $S$-subalgebra of $A(\Delta)$ whose generators in degree $1$ coincide.

The graded components of $B(\Delta)$ are of the form $L_k(\Delta)t^k$, where $L_k(\Delta)$ is a monomial ideal whose squarefree part, denoted by $L_k(\Delta)^{sq}$, corresponds to the squarefree $k$-covers. The above mentioned duality is a consequence of   the following duality
\begin{eqnarray}
\label{volleyball}
 L_j(\Delta^{(d-i)})^{sq}=L_i(\Delta^{(d-j)})^{sq}, \quad 1\leq i,j\leq d,
\end{eqnarray}
inside $B(\Delta)$,  which is valid for any pure simplicial complex of dimension $d-1$, no matter whether $B(\Delta)$ is standard graded or not.

This result is  a simple consequence of Theorem~\ref{dual} where it is shown that the squarefree $k$-covers of $\Delta$  correspond to the vertex covers of the $(d-k)$-skeleton  $\Delta^{(d-k})$ of $\Delta$.

The duality  described in (\ref{volleyball}) yields the  desired generalization of \cite[Theorem 1.1]{VHF}, and we obtain in Corollary~\ref{duality}
\[
 I(\Delta^{(k)})^\vee=(I(\Delta)^\vee)^{\langle d-k\rangle} \quad\text{for all} \quad k,
\]
if and only if $B(\Delta)$ is standard graded. Therefore it is of interest to know when $B(\Delta)$ is standard graded.

The starting point of our investigations has been the formula  $I(\Delta_{r}(P)^{(k)})^\vee=(I(\Delta_{r}(P))^\vee)^{\langle d-k\rangle}$. As explained  before this implies that $B(\Delta_r(P))$ is standard graded. As a last result in Section~1, we show that even  the algebra $A(\Delta_r(P))$ is standard graded.

In Section 2 we compare the algebras $A(\Delta)$ and $B(\Delta)$, and discuss the following questions:
\begin{itemize}
\item[(1)]   When is $B(\Delta)$ standard graded and when does this imply that $A(\Delta)$ is standard graded?
\item[(2)] When do we have that  $B(\Delta)=A(\Delta)$?
\end{itemize}
In general $B(\Delta)$ may be standard graded, while $A(\Delta)$ is not standard graded, as an example shows which was communicated to the authors by Villarreal; see Section~2.

On the other hand,  quite often it happens that  $A(\Delta)$ is  standard graded if $B(\Delta)$ is standard graded.  In Proposition~\ref{standardgraph} we show that for any $1$-dimensional simplicial complex $\Delta$, the algebra $B(\Delta)$ is standard graded if and only if $A(\Delta)$ is standard graded. We also show in Proposition \ref{coveringIdeal} that the same statement holds true  when the facet ideal of $\Delta$ is the covering ideal of a graph.  In Theorem~\ref{no-odd}, it is shown that this is also the case for all  subcomplexes of $\Delta$ if and only if $\Delta$ has no special odd cycles. In the  remaining part of Section~2 we present cases for which we have $B(\Delta)=A(\Delta)$.  The $1$-dimensional simplicial complexes with this property are classified in Proposition~\ref{graphequality}. A classification of simplicial complexes $\Delta$ of higher dimension with  $B(\Delta)=A(\Delta)$ seems to be unaccessible for the moment. Thus we consider simplicial complexes in higher dimensions which generalize the concept of a graph. Roughly speaking, we replace the vertices of a graph by simplices of various dimensions. This graph is called the intersection graph of $\Delta$.   The main result regarding this class of simplicial complexes is formulated in Theorem~\ref{str-intersec-prop}, where the criterion for the equality $A(\Delta)=B(\Delta)$ is given in terms of the intersection graph of $\Delta$.

The last section of this paper is devoted to study the vertex cover algebras of shifted  simplicial complexes. A   simplicial complexes $\Delta$ is shifted if its set of facets  is a Borel sets $\B$. When the set of facets of $\Delta$ is principal Borel we have  $B(\Delta)=A(\Delta)$ as shown in Theorem~\ref{borel-generators}. Since all skeletons of such simplicial complexes also correspond to principal Borel sets, one even has that  $B(\Delta^{(i)})=A(\Delta^{(i)})$ for all~$i$.

The squarefree monomial ideal $(\{x_F\:\; x_Ft\in A_1(\Delta)\})$ generated by the degree~1 elements of $A(\Delta)$ is the Alexander dual $I(\Delta)^\vee$ of $I(\Delta)$. Francisco, Mermin and Schweig showed that this ideal is again a squarefree Borel ideal, and they give precise Borel generators in the case that $\B$ is principal Borel. We generalize this result in Theorem~\ref{B-generators}, by showing that  the squarefree part of the ideal $(\{x_F\:\; x_Ft^k\in A_k(\Delta)\})$ is again a squarefree Borel ideal whose generators can be explicitly  described when $\B$ is principal Borel. It turns out that in this case  the $S$-algebra $A(\Delta)$ may have  minimal generators in the degrees up to $\dim\Delta+1$. In Proposition~\ref{higher-generator}, we present a necessary and sufficient condition such that this maximal degree achieved.
\medskip

We would like to thank Professor Villarreal for several useful comments and for drawing our attention to related work on this subject.

 \section{Duality}

We first fix some notation and recall some basic concepts regarding simplicial complexes.

Let $\Delta$ be a simplicial complex of dimension $d-1$ on the vertex set $[n]$. We denote by $\F(\Delta)$ the set of facets of $\Delta$.

The $i$-skeleton $\Delta^{(i)}$ of $\Delta$ is defined to be the simplicial complex whose faces are those of $\Delta$ with $\dim F\leq i$. Observe that  $\Delta^{(i)}$  is a pure simplicial complex with $\F(\Delta^{(i)})=\{F\in \Delta\:\; \dim F=i\}$ if $\Delta$ is pure.

\medskip
The Alexander dual $\Delta^\vee$ of $\Delta$ is the simplicial complex with
\[
\Delta^\vee=\{[n]\setminus F\:\; F\not\in \Delta\}.
\]
One has $(\Delta^\vee)^\vee =\Delta$.

\medskip
Let $K$ be a field and $S=K[x_1,\ldots,x_n]$ the polynomial ring over $K$ in the variables $x_1,\ldots,x_n$. The Stanley-Reisner ideal of $\Delta$ is defined to be
\[
I_\Delta=(\{x_F\:\; F\subseteq  [n], F\not\in\Delta\}).
\]
Here $x_F=\prod_{i\in F}x_i$ for $F\subseteq [n]$.

For $F\subseteq [n]$, we denote  by $P_F$ the monomial prime ideal  $(\{x_i\:\; i\in F\}$. Let   $$I_\Delta=P_{F_1}\sect \cdots \sect P_{F_m}$$ be  the irredundant  primary decomposition of $I_\Delta$, then
$I_{\Delta^\vee}$ is minimally  generated by $x_{F_1},\ldots,x_{F_m}.$

Now let $I\subseteq S$ be an arbitrary squarefree monomial ideal.  There is a unique simplicial complex $\Delta$ on $[n]$ such that  $I=I_\Delta$. We set $I^\vee =I_{\Delta^\vee}$. It follows that if $I=P_{F_1}\sect \cdots \sect P_{F_m}$, then $I^\vee=(x_{F_1},\ldots,x_{F_m})$, and if $J=(x_{G_1},\ldots,x_{G_r})$, then $J^\vee=P_{G_1}\sect \cdots \sect P_{G_m}$.

\medskip
Let $k$ be a nonnegative  integer. A {\em $k$-cover} or a {\em cover of order $k$} of $\Delta$  is a nonzero vector $\mathbf{c}=(c_1,\ldots,c_n)$ whose entries are nonnegative integers such that $\sum_{i\in F}c_i\geq k$ for all $F\in \F(\Delta)$. We denote the set $\{i\in [n]\:\; c_i\neq 0\}$ by $\supp(\cb)$. The $k$-cover $\mathbf{c}$ is called  {\em squarefree } if $c_i\leq 1$ for all $i$. A $(0,1)$-vector $\mathbf{c}$ is a squarefree cover of $\Delta$ with positive order if and only if $\supp(\cb)$ is a vertex cover of $\Delta$. A $k$-cover $\mathbf{c}$ of $\Delta$ is called {\em decomposable} if there exist an $i$-cover $\mathbf{a}$ and a $j$-cover  $\mathbf{b}$ of $\Delta$ such that $\mathbf{c}=\mathbf{a}+\mathbf{b}$ and $k=i+j$. Then $\cb=\ab+\bb$ is called a {\em decomposition} of $\cb$. A $k$-cover of $\Delta$ is {\em indecomposable} if it is not decomposable.

The $K$-vector space spanned by the monomials $x^{\cb}$ where $\cb$ is a $k$-cover, denoted by $J_k(\Delta)$, is an ideal. Obviously one has $J_k(\Delta)J_\ell(\Delta)\subseteq J_{k+\ell}(\Delta)$ for all $k$ and $\ell$. Therefore
\[
A(\Delta)=\Dirsum_{k\geq 0}J_k(\Delta)t^k\subseteq S[t]
\]
is a graded $S$-subalgebra of the polynomial ring $S[t]$ over $S$ in the variable $t$. The $S$-algebra $A(\Delta)$ is called the vertex cover algebra  of $\Delta$. This algebra is minimally generated by  the monomials $x^{\mathbf{c}}t^k$ where $\mathbf{c}$ is an indecomposable $k$-cover of $\Delta$ with $k\neq 0$.
 If $\F(\Delta)=\{F_1,\ldots,F_m\}$, then $J_k(\Delta)=P_{F_1}^k\sect \cdots \sect P_{F_m}^k$ \cite[Lemma 4.1]{HHT}. In particular, for the facet ideal of $\Delta$, i.e.
 $I(\Delta)= (x_{F_1},\ldots,x_{F_m})$,  one has $J_k(\Delta)=(I(\Delta)^\vee)^{(k)}$ where $(I(\Delta)^\vee)^{(k)}$ is the $k$-th symbolic power of $I(\Delta)^\vee$.

\medskip
Let $B(\Delta)$ be the $S$-subalgebra of $S[t]$ generated by the elements $x^\cb t^k$ where $\cb$ is a squarefree $k$-cover. The algebra $B(\Delta)$ is called the {\em squarefree vertex cover algebra} of $\Delta$. Observe that $B(\Delta)$ is a graded $S$-algebra,
\[
B(\Delta)=\Dirsum_{k\geq 0}L_k(\Delta)t^k.
\]
It is clear that each $L_k(\Delta)$ is a monomial ideal in $S$ and that $L_k(\Delta)\subseteq J_k(\Delta)$ for all $k$.

For a monomial ideal $I\subseteq S$, we denote by $I^{sq}$ the squarefree monomial ideal generated by all squarefree monomials $u\in I$.
The $k$-th squarefree power of a monomial ideal $I$, denoted by $I^{\langle k\rangle}$,  is defined to be $(I^k)^{sq}$.

\begin{Proposition}
\label{alsoeasy}
Let $\Delta$  be a simplicial complex with $\F(\Delta)=\{F_1,\ldots,F_m\}$. Then
$$ L_k(\Delta)^{sq}= \bigcap _{i=1}^m {P_{F_i}}^{\langle k\rangle},$$
and the algebra $B(\Delta)$ is standard graded if and only if $\bigcap _{i=1}^m {P_{F_i}}^{\langle k\rangle}={(\bigcap _{i=1}^m P_{F_i})}^{\langle k\rangle}.$
\end{Proposition}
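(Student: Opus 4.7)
The plan is to handle the two statements in turn, first computing the squarefree parts directly and then rephrasing the standard-graded criterion so that it follows from the first part. For the first equality I would identify the squarefree monomials on both sides. For any $F\subseteq[n]$, a squarefree monomial $x_G$ lies in $P_F^{\langle k\rangle}=(P_F^k)^{sq}$ iff it is divisible by some degree-$k$ product of variables $x_i$ with $i\in F$; since $x_G$ is squarefree, this forces $|F\cap G|\geq k$. Taking the intersection over the facets, the squarefree monomials in $\bigcap_i P_{F_i}^{\langle k\rangle}$ are precisely the $x_G$ with $|F_i\cap G|\geq k$ for every $i$, i.e., the squarefree $k$-covers of $\Delta$. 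On the other hand, every $x^\cb\in L_k(\Delta)$ lies in $J_k(\Delta)$, so if $x^\cb$ is squarefree then $\cb$ is a squarefree $k$-cover; and each such $x^\cb$ is, by definition, a generator of $B(\Delta)$. Thus $L_k(\Delta)^{sq}$ and $\bigcap_i P_{F_i}^{\langle k\rangle}$ share the same set of squarefree monomials and are equal.

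For the second statement, I would first observe that every $1$-cover can be replaced by the $(0,1)$-vector with the same support without losing the covering property, so $J_1(\Delta)=\bigcap_i P_{F_i}$ is a squarefree ideal equal to $L_1(\Delta)$; therefore $(\bigcap_i P_{F_i})^{\langle k\rangle}=(L_1(\Delta)^k)^{sq}$. Combined with the first part, the proposed equality reads $L_k(\Delta)^{sq}=(L_1(\Delta)^k)^{sq}$ for every $k\geq 1$. If $B(\Delta)$ is standard graded, then $L_k(\Delta)=L_1(\Delta)^k$ and passing to squarefree parts yields the identity.

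For the reverse implication, I would take an arbitrary generator $x^\cb$ of $L_k(\Delta)$, which by the algebra-generator description of $B(\Delta)$ satisfies $\cb\geq\cb_1+\cdots+\cb_s$ (componentwise) for some squarefree $k_i$-covers $\cb_i$ with $\sum k_i=k$. Each $x^{\cb_i}$ is a squarefree monomial of $L_{k_i}(\Delta)$, so the hypothesis applied with $k=k_i$ places it in $(L_1(\Delta)^{k_i})^{sq}\subseteq L_1(\Delta)^{k_i}$; hence $\cb_i$ dominates a sum of $k_i$ vertex-cover vectors. Collecting these decompositions across $i=1,\dots,s$ exhibits $\cb$ as dominating a sum of $k$ vertex-cover vectors, so $x^\cb\in L_1(\Delta)^k$, giving $L_k(\Delta)\subseteq L_1(\Delta)^k$. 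This backward direction is the main obstacle; the key point is that the generator structure of $B(\Delta)$ lets one deduce the possibly non-squarefree equality $L_k(\Delta)=L_1(\Delta)^k$ from squarefree equalities at each intermediate multiplicity $k_i$.
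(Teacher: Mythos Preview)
Your argument is correct and follows essentially the same approach as the paper. For the first equality both you and the paper identify the squarefree monomials on each side with the squarefree $k$-covers; for the second part the paper compresses your reverse implication into the single assertion that $B(\Delta)$ is standard graded if and only if every squarefree $k$-cover is a sum of $k$ $1$-covers, which is precisely what your unpacking of the generator structure of $L_k(\Delta)$ establishes in detail.
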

\begin{proof}
Let $x^{\cb}$ be a squarefree monomial in $L_k(\Delta)^{sq}$. Then  $\sum_{j\in F_i}c_j\geq k$ for all $i$, and  $c_j\leq 1$ for all $j$. So if $u_i=\prod_{j\in F_i}x_j^{c_j}$, then $u_i$ is of degree at least $k$ which implies $u_i\in P_{F_i}^{\langle k\rangle}$. Furthermore,  we have $u_i|x^c$ for all $i$.   Therefore, $x^{\cb}\in \bigcap _{i=1}^m {P_{F_i}}^{\langle k\rangle}$.  On the other hand,  if
 $x^{\cb} \in \bigcap _{i=1}^m {P_{F_i}}^{\langle k\rangle}$, then $\sum_{j\in F_i}c_j\geq k$ for all $i$, or in other words $\cb$ is a $k$-cover. Hence $x^{\cb}\in L_k(\Delta)^{sq}.$

 One has ${(\bigcap _{i=1}^m P_{F_i})}^{\langle k\rangle} \subseteq \bigcap _{i=1}^m {P_{F_i}}^{\langle k\rangle}$. Now the graded algebra $B(\Delta)$ is standard graded if and only if every squarefree $k$-cover of $\Delta$ can be written as a sum of $k$ 1-cover of $\Delta$, and this is the case  if and only if
 $\bigcap _{i=1}^m {P_{F_i}}^{\langle k\rangle} \subseteq {(\bigcap _{i=1}^m P_{F_i})}^{\langle k\rangle}$.
\end{proof}

It turns out that every squarefree $k$-cover of $\Delta$ can be considered as a $1$-cover of a suitable skeleton of $\Delta$ as we see in the next result.
\begin{Theorem}
\label{dual}
Let $\Delta$ be a  simplicial complex of dimension $d-1$ on the vertex set  $[n]$,   and $k \in \{1, \dots, d\}$. Then
\[
L_k(\Delta)^{sq}\subseteq  I(\Delta^{(d-k)})^\vee \quad \text{for all} \quad k.
\]
Furthermore, the following conditions are equivalent:
\begin{enumerate}
\item[(i)]$\Delta$ is a pure simplicial complex;
\item[(ii)]$L_k(\Delta)^{sq}=  I(\Delta^{(d-k)})^\vee \quad \text{for some} \quad k \neq 1$;
\item[(iii)]$L_k(\Delta)^{sq}=  I(\Delta^{(d-k)})^\vee \quad \text{for all} \quad k$.
\end{enumerate}
\end{Theorem}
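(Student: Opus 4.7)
The proof naturally splits into three pieces: the universal inclusion, the implication (i)$\Rightarrow$(iii), and the implication (ii)$\Rightarrow$(i). I would begin by unpacking the Alexander dual as
\[
I(\Delta^{(d-k)})^\vee \;=\; \bigcap_{F\in\F(\Delta^{(d-k)})} P_F,
\]
so that a squarefree monomial $x^\cb$ lies in it iff $\supp(\cb)\cap F\neq\emptyset$ for every facet $F$ of $\Delta^{(d-k)}$. A key preliminary is to classify these facets: either $F$ is itself a facet of $\Delta$ with $\dim F<d-k$, or $\dim F=d-k$ (in which case $F$ sits inside some facet $G$ of $\Delta$ with $|G|\leq d$).

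For the universal inclusion, I take a squarefree $k$-cover $\cb$ and a facet $F$ of $\Delta^{(d-k)}$, and pick a facet $G$ of $\Delta$ containing $F$. The trivial estimate
\[
\sum_{i\in F}c_i \;=\; \sum_{i\in G}c_i - \sum_{i\in G\setminus F}c_i \;\geq\; k-(|G|-|F|)
\]
yields $\sum_{i\in F}c_i\geq k\geq 1$ in the first case (where $|F|=|G|$) and $\sum_{i\in F}c_i\geq d+1-|G|\geq 1$ in the second (where $|F|=d-k+1$ and $|G|\leq d$). Hence $\supp(\cb)\cap F\neq\emptyset$.

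For (i)$\Rightarrow$(iii), purity forces $|G|=d$ for every facet $G$, so the facets of $\Delta^{(d-k)}$ are precisely the $(d-k+1)$-subsets of facets of $\Delta$. To reverse the inclusion, take a squarefree $x^\cb\in I(\Delta^{(d-k)})^\vee$ and verify $\cb$ is a $k$-cover: if some facet $G$ of $\Delta$ had $\sum_{i\in G}c_i<k$, then $G$ would contain at least $d-k+1$ coordinates outside $\supp(\cb)$, and any $(d-k+1)$-subset of those would be a facet of $\Delta^{(d-k)}$ disjoint from $\supp(\cb)$, contradiction. The step (iii)$\Rightarrow$(ii) is immediate.

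For (ii)$\Rightarrow$(i) I argue by contrapositive. Suppose $\Delta$ has a facet $G$ of size $s<d$; I produce a counterexample for every $k\in\{2,\dots,d\}$. When $k\geq s+1$, no squarefree $k$-cover exists (any $(0,1)$-vector satisfies $\sum_{i\in G}c_i\leq s<k$), so $L_k(\Delta)^{sq}=0$ while $I(\Delta^{(d-k)})^\vee$ remains a nonzero proper ideal. When $2\leq k\leq s$, I let $V$ be any $(k-1)$-subset of $G$ and take $\cb$ to be the $(0,1)$-vector with support $([n]\setminus G)\cup V$. Then $\sum_{i\in G}c_i=k-1<k$, so $\cb$ is not a $k$-cover; but no facet $F$ of $\Delta^{(d-k)}$ can lie inside $G\setminus V$, because such an $F$ would either be a facet of $\Delta$ properly contained in $G$ (impossible), or force the cardinality bound $d-k+1=|F|\leq|G\setminus V|=s-k+1$, i.e., $d\leq s$ (also impossible). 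The main technical obstacle is this careful classification of facets of $\Delta^{(d-k)}$ in the non-pure case, since the clean description $\F(\Delta^{(d-k)})=\{F\in\Delta:\dim F=d-k\}$ cited earlier only holds under purity; once it is in hand, the cardinality bookkeeping is routine.
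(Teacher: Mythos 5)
Your proof is correct and follows the same overall strategy as the paper's: the universal inclusion and (i)$\Rightarrow$(iii) are the identical counting arguments (bounding $|G\setminus F|$ by $k-1$, and extracting a $(d-k+1)$-subset of an uncovered facet), and (ii)$\Rightarrow$(i) is likewise proved by exhibiting a vertex cover of $\Delta^{(d-k)}$ built from the complement of a small facet together with a few of its vertices. The only real divergence is the bookkeeping in (ii)$\Rightarrow$(i): the paper splits cases according to whether the small facet $H$ survives as a facet of the skeleton and adjoins exactly $|H|+k-d$ (resp.\ one) of its vertices, deriving the contradiction from the forced inequality $|H|+k-d\geq k$, whereas you split on $k$ versus $|G|$, dispatch the range $k>|G|$ by noting that $L_k(\Delta)^{sq}$ vanishes while the dual does not, and otherwise adjoin $k-1$ vertices so that the failure to be a $k$-cover is immediate; both variants work, and your version has the added merit of making explicit the classification of facets of $\Delta^{(d-k)}$ in the non-pure case, which the paper's argument leaves implicit.
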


\begin{proof}
Let  $x^{\cb}$ belong to the minimal set of monomial generators of the ideal $L_k(\Delta)^{sq}$. So $\cb$ is a squarefree $k$-cover of $\Delta$.  Let $C=\supp(\cb) =\{i : c_i=1\}$. Then $C$ contains at least $k$ elements of each facet. Therefore, $C$ also meets all  $(d-k)$-faces of $\Delta$. This implies that $x^\cb \in I(\Delta^{(d-k)})^\vee$.

Now we show that statements (i), (ii) and (iii) are equivalent.

(i)$\Rightarrow$ (iii): Let $x^{\cb}$  be in the minimal set of monomial generators of the ideal
$I(\Delta^{(d-k)})^\vee$. Then $C=\supp(\cb)=\{i : c_i=1\}$ is a minimal vertex cover of $\Delta^{(d-k)}$. Every facet of $\Delta$ is of dimension of $d-1$. Hence the set $C$ contains  at least $k$ vertices of each facet because otherwise there exists a $(d-k)$-face of $\Delta$ which does not intersect $C$. Therefore $\cb$ is a $k$-cover of $\Delta$. Since  $c_i \leq 1$ for all $i$, the cover  $x^\cb$ is  squarefree, and hence   $x^\cb \in L_k(\Delta)^{sq}$.

(iii)$\Rightarrow $(ii):  This implication is trivial.

(ii)$\Rightarrow$ (i): Suppose for some fixed $k \neq 1$, we have $L_k(\Delta)^{sq}=  I(\Delta^{(d-k)})^\vee$. By contrary assume there is a facet $H$ of $\Delta$ of dimension $\ell-1$ where $\ell<d$. Every facet of $\Delta^{(d-k)}$ which is not a subset  of $H$ contains at least one vertex which does not belong to $H$. Hence there exists  a set $A\subseteq [n]$ such that $A \cap H = \emptyset$  and $A \cap F \neq \emptyset$  for all facets $F$ of $\Delta^{(d-k)}$ which are not subsets  of $H$.

First suppose that $\ell-1<d-k$, then $H$ is also a facet of  $\Delta^{(d-k)}$. By choosing a vertex $i$ of $H$, the set $C = A \cup \{ i\}$ is a vertex cover of $\Delta^{(d-k)}$ which contains exactly one vertex of $H$. Hence for the vector $\cb$ with $C=\supp(\cb)$, the monomial $x^{\cb}$ belongs to $I(\Delta^{(d-k)})^\vee$. But since $C$ contains only one vertex of $H$, it does not belong to $L_k(\Delta)^{sq}$, a contradiction.

Next  suppose that $\ell-1 \geq d-k$. By choosing $\ell+k-d$ vertices $i_1,\ldots, i_{\ell+k-d}$ of $H$, the set
 $C= A \cup \{i_1,\ldots, i_{\ell+k-d}\}$ is a vertex cover of $\Delta^{(d-k)}$. Indeed, let $F$ be a facet of $\Delta^{(d-k)}$. If $F\not\subseteq H$, then $F\sect C\neq\emptyset$ because $F\sect A\neq\emptyset$ by the choice of $A$. If $F\subseteq H$, then $|F|=d-k+1$. So
 $F\sect H\neq \emptyset$ because $$|F|+|C\sect H|=(d-k+1)+(\ell+k-d)>\ell=|H|.$$ Hence for the vector $\cb$ with $C=\supp(\cb)$, we have   $x^{\cb} \in I(\Delta^{(d-k)})^\vee$. Thus $x^{\cb}\in  L_k(\Delta)^{sq}$ which implies that $C$ contains at least $k$ elements of $H$.  Furthermore, we know  that   $C$  contains exactly $\ell+k-d$ vertices of $H$. Hence $\ell+k-d \geq k$ which means $\ell \geq d$, a contradiction.
\end{proof}

As an immediate consequence of Theorem~\ref{dual} we obtain

\begin{Corollary}
\label{duality}
Let $\Delta$ be a pure simplicial complex of dimension $d-1$. Then
\[
 I(\Delta^{(k)})^\vee=(I(\Delta)^\vee)^{\langle d-k\rangle} \quad\text{for all} \quad k,
\]
 if and only if $B(\Delta)$ is standard graded.
\end{Corollary}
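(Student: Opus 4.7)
The plan is to derive this as a direct combination of Theorem~\ref{dual} and Proposition~\ref{alsoeasy}, translating the hypothesis and the conclusion into statements about the ideals $L_k(\Delta)^{sq}$. First I would record two identifications that make the bridge possible: since the facets of $\Delta$ are $F_1,\ldots,F_m$, one has $I(\Delta)=(x_{F_1},\ldots,x_{F_m})$ and therefore $I(\Delta)^\vee = \bigcap_{i=1}^m P_{F_i}$; moreover $L_1(\Delta)=I(\Delta)^\vee$, because the squarefree $1$-covers are exactly the vertex covers of $\Delta$.

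For the forward direction, I would assume $B(\Delta)$ is standard graded. Then $L_k(\Delta)=L_1(\Delta)^k=(I(\Delta)^\vee)^k$, so taking squarefree parts gives
\[
L_k(\Delta)^{sq}=\bigl((I(\Delta)^\vee)^k\bigr)^{sq}=(I(\Delta)^\vee)^{\langle k\rangle}.
\]
Since $\Delta$ is pure of dimension $d-1$, Theorem~\ref{dual} identifies the left side with $I(\Delta^{(d-k)})^\vee$. Substituting $k\mapsto d-k$ yields $I(\Delta^{(k)})^\vee=(I(\Delta)^\vee)^{\langle d-k\rangle}$ for every $k$, which is the claimed formula.

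For the converse, I would run the same chain backwards, but relying on Proposition~\ref{alsoeasy} to recover the standard-graded property. Assuming the equality $I(\Delta^{(k)})^\vee=(I(\Delta)^\vee)^{\langle d-k\rangle}$ for all $k$, I replace $k$ by $d-k$ and use Theorem~\ref{dual} together with $I(\Delta)^\vee=\bigcap_i P_{F_i}$ to deduce
\[
\bigcap_{i=1}^m P_{F_i}^{\langle k\rangle}=L_k(\Delta)^{sq}=I(\Delta^{(d-k)})^\vee=(I(\Delta)^\vee)^{\langle k\rangle}=\Bigl(\bigcap_{i=1}^m P_{F_i}\Bigr)^{\langle k\rangle}
\]
for every $k$. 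By the characterization in Proposition~\ref{alsoeasy}, this is precisely the statement that $B(\Delta)$ is standard graded.

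No serious obstacle is expected; the main care points are just the two bookkeeping identifications $L_1(\Delta)=I(\Delta)^\vee=\bigcap_i P_{F_i}$ and the index shift $k\leftrightarrow d-k$ that converts the squarefree power on one side into the skeleton index on the other. With these in hand, the proof is a short two-line chain in each direction.
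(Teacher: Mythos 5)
Your proposal is correct and follows essentially the same route as the paper: the paper's proof is exactly the observation that standard gradedness of $B(\Delta)$ is equivalent to $L_k(\Delta)^{sq}=(I(\Delta)^\vee)^{\langle k\rangle}$ for all $k$ (which is Proposition~\ref{alsoeasy} combined with $L_1(\Delta)=I(\Delta)^\vee=\bigcap_i P_{F_i}$), followed by the identification $L_k(\Delta)^{sq}=I(\Delta^{(d-k)})^\vee$ from Theorem~\ref{dual} and the index shift $k\leftrightarrow d-k$. You have merely written out the two bookkeeping steps that the paper leaves implicit, so there is nothing to correct.
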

\begin{proof}
It is enough to notice that $B(\Delta)$ is standard graded if and only if $L_k(\Delta)^{sq}=(I(\Delta)^\vee)^{\langle k\rangle}$ for all $k$. Now Theorem~\ref{dual} yields the result.
\end{proof}

\begin{Corollary}[Duality]
Let $\Delta$ be a pure simplicial complex of dimension $d-1$. Then
\[
 L_j(\Delta^{(d-i)})^{sq}=L_i(\Delta^{(d-j)})^{sq},
\]
where $1\leq i,j \leq d.$
\end{Corollary}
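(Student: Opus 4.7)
The plan is to deduce the identity directly from Theorem~\ref{dual} by applying it to each of the skeletons $\Delta^{(d-i)}$ and $\Delta^{(d-j)}$. Since $\Delta$ is pure of dimension $d-1$, for any $1\le i\le d$ the $(d-i)$-skeleton $\Delta^{(d-i)}$ is again pure, now of dimension $d-i$. Theorem~\ref{dual} applied to this complex (so with the ambient dimension parameter equal to $d-i+1$) at level $k=j$ yields
\[
L_j(\Delta^{(d-i)})^{sq}\;=\;I\bigl((\Delta^{(d-i)})^{(d-i+1-j)}\bigr)^\vee,
\]
which is valid as long as $j\le d-i+1$, i.e.\ $i+j\le d+1$. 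The analogous application to $\Delta^{(d-j)}$ produces the parallel formula for $L_i(\Delta^{(d-j)})^{sq}$.

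The next step is to simplify the iterated skeletons. Because $d+1-i-j\le d-i$ whenever $j\ge 1$, taking the $(d+1-i-j)$-skeleton of $\Delta^{(d-i)}$ just collects the faces of $\Delta$ of dimension at most $d+1-i-j$, so $(\Delta^{(d-i)})^{(d+1-i-j)}=\Delta^{(d+1-i-j)}$. Since the integer $d+1-i-j$ is symmetric in $i$ and $j$, the same reduction applied on the other side produces the identical subcomplex. Both sides of the asserted equality therefore collapse to $I(\Delta^{(d+1-i-j)})^\vee$, proving the corollary in the range $i+j\le d+1$.

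It remains to treat the boundary range $i+j>d+1$, where Theorem~\ref{dual} is not directly available. Here both sides vanish for a trivial reason: each facet of $\Delta^{(d-i)}$ has exactly $d-i+1$ vertices, so a squarefree $j$-cover with $j>d-i+1$ cannot exist, and consequently $L_j(\Delta^{(d-i)})^{sq}=0$; the symmetric argument disposes of $L_i(\Delta^{(d-j)})^{sq}$. There is no substantive obstacle beyond careful bookkeeping of the skeleton indices and the observation that the two iterated skeletons are in fact the same complex; the entire mathematical content comes from Theorem~\ref{dual}.
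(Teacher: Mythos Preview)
Your proof is correct and follows essentially the same approach as the paper: apply Theorem~\ref{dual} to each of $\Delta^{(d-i)}$ and $\Delta^{(d-j)}$ and observe that both sides reduce to $I(\Delta^{(d+1-i-j)})^\vee$. Your argument is in fact more careful than the paper's, which silently uses the iterated-skeleton identity $(\Delta^{(d-i)})^{(d+1-i-j)}=\Delta^{(d+1-i-j)}$ and does not explicitly treat the degenerate range $i+j>d+1$ that you handle separately.
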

\begin{proof}
Consider integer numbers $i,j\in \{1,\ldots,d\}$. Since $\Delta$ is pure, the simplicial complexes $\Delta^{(d-i)}$ and $\Delta^{(d-j)}$ are  also pure.  So by Theorem~\ref{dual}, we have
$$L_j(\Delta^{(d-i)})^{sq}=  I(\Delta^{((d-i+1)-j)})^\vee,$$
and
$$L_i(\Delta^{(d-j)})^{sq}=  I(\Delta^{((d-j+1)-i)})^\vee.$$
 Thus $L_j(\Delta^{(d-i)})^{sq}=L_i(\Delta^{(d-j)})^{sq}$.
\end{proof}

As an application we consider the following class of ideals introduced in \cite{VHF}. Let $P=\{p_1,\ldots,p_m\}$ be a finite poset  and $r\geq 1$ an integer. We consider  the $(r\times m)$-matrix $X=(x_{ij})$ of indeterminates, and define the ideal $I_r(P)$ generated by all monomials $x_{1j_1}{x_{2j_2}}\cdots  x_{rj_r}$ with $p_{j_1}\leq p_{j_2}\leq\ldots \leq  p_{j_r}$. Let $\Delta_{r}(P)$ be the simplicial complex on the vertex set $V=\{(i,j)\: 1\leq i \leq r,\;1\leq j \leq m  \}$ with the property that $I(\Delta_{r}(P))=I_r(P)$. So $\Delta_r(P)$ is of dimension $r-1$. In \cite[Theorem 1.1]{VHF},  it is shown that
\[
 I(\Delta_{r}(P)^{(k)})^\vee=(I(\Delta_{r}(P))^\vee)^{\langle r-k\rangle} \quad\text{for all} \quad k.
\]
Hence Corollary \ref{duality} implies that $B(\Delta_{r}(P))$ is standard graded. One even has

\begin{Theorem}
\label{evenmaybe}
The $S$-algebra $A(\Delta_{r}(P))$ is standard graded.
\end{Theorem}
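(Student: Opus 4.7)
The claim that $A(\Delta_r(P))$ is standard graded translates to $J_k(\Delta_r(P)) = J_1(\Delta_r(P))^k$ for every $k\geq 1$; equivalently, every $k$-cover of $\Delta_r(P)$ decomposes as the sum of $k$ $1$-covers. I would prove this by encoding the multichain structure of $P$ as directed paths in a layered DAG and then invoking the integer max-flow/min-cut theorem in its vertex-capacitated form.

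Specifically, I would construct the layered DAG $G$ with vertex set $\{s,t\}\cup([r]\times[m])$, arcs $s\to(1,j)$ and $(r,j)\to t$ for all $j\in[m]$, and arcs $(i,j)\to(i+1,j')$ whenever $p_j\leq p_{j'}$ in $P$. A routine verification shows that the directed $s$-$t$ paths of $G$ correspond bijectively to length-$r$ multichains in $P$, and hence to the facets of $\Delta_r(P)$. Under this correspondence, a $k$-cover of $\Delta_r(P)$ is exactly a nonnegative integer vertex weighting $\cb$ on $V(G)\setminus\{s,t\}$ such that every $s$-$t$ path of $G$ has total $\cb$-weight at least $k$, while a $1$-cover is the same assertion with $k=1$.

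The proof then reduces to the integer decomposition property: any such weighting $\cb$ can be written as $\cb_1+\cdots+\cb_k$ where each $\cb_i$ is an integer $1$-cover. This is a standard consequence of the integer vertex-capacitated max-flow/min-cut theorem for DAGs, or equivalently, the Mengerian property of the $s$-$t$ path--vertex incidence matrix of a DAG. I would prove it by induction on $k$: given a $k$-cover $\cb$, exhibit an integer $1$-cover $\cb_1\leq \cb$ such that $\cb-\cb_1$ is still a $(k-1)$-cover, and iterate.

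The main technical obstacle is producing $\cb_1$ at each step. The key property required is that $\cb_1$ must have weight exactly $1$ on every \emph{tight} $s$-$t$ path, i.e., on multichains $j_1\leq\cdots\leq j_r$ with $\sum_i c_{i,j_i}=k$; on loose paths there is automatic slack. I would establish this via Menger's theorem applied to the induced subDAG on the support of $\cb$, together with the fact that $B(\Delta_r(P))$ is already standard graded (Corollary~\ref{duality}), which governs how tight facets can be simultaneously covered by a squarefree $1$-cover. Once this lemma is in place, iterating the construction yields the desired decomposition and shows that $A(\Delta_r(P))$ is standard graded.
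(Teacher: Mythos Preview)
Your DAG encoding is correct and the reduction to the Mengerian property of the $s$--$t$ path clutter is a legitimate route: once you know that clutter has the integer max-flow min-cut property, $A(\Delta_r(P))$ standard graded follows immediately. If you had simply cited that classical fact (vertex-capacitated integer MFMC for digraphs) the proof would be complete and, in a sense, more conceptual than the paper's.

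The gap is in your final paragraph, where you try to carry out the peeling step by hand. Invoking Menger's theorem on the subDAG induced by $\supp(\cb)$ together with the fact that $B(\Delta_r(P))$ is standard graded does not obviously produce a $1$-cover $\cb_1\leq\cb$ with $\cb-\cb_1$ a $(k-1)$-cover. Knowing that $B$ is standard graded tells you every squarefree $k$-cover splits into squarefree $1$-covers, but here $\cb$ is not squarefree, and an arbitrary squarefree $1$-cover supported inside $\supp(\cb)$ can meet a tight facet in more than one vertex, destroying the $(k-1)$-cover property of the remainder. You have correctly identified the constraint (weight exactly~$1$ on tight paths) but not shown how to meet it; this is precisely the heart of the matter, and neither Menger nor the $B$-result supplies it directly.

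The paper fills this gap with an explicit construction that is worth knowing: take $A$ to be the set of vertices $(i,j)$ with $c_{ij}\neq 0$ such that $(i,j)$ is reachable from the source along a chain through zero-weight vertices only. This is a $1$-cover (every path must eventually hit a positive-weight vertex, and the first such one lies in $A$). The crucial estimate is that if a facet $F$ meets $A$ in $t$ vertices, then by rerouting the initial segment of $F$ through a zero-weight chain witnessing the last of those $t$ vertices, one sees $\sum_{F} c \geq k + (t-1)$; hence $\sum_F(c-a)\geq k-1$. Note this does \emph{not} force $t=1$ on tight paths; rather, it shows that wherever $A$ is heavy the original cover had slack to spare. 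This rerouting argument is the idea your sketch is missing.
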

\begin{proof}
Let  the  $(r\times m)$-matrix $\cb=[c_{ij}]$ be a $k$-cover of $\Delta_r(P)$ with $k\geq 2$. We show that there is a decomposition of $\cb$ into a 1-cover $\ab$ and a $(k-1)$-cover $\bb$ of $\Delta_r(P)$. This will imply that $A(\Delta_{r}(P))$ is standard graded.

We consider the subset $A$ of $V$  containing the  vertices $(i,j)\in V$ with the following properties:
\begin{enumerate}
\item[(i)] $c_{ij}\neq 0$;
\item[(ii)] There exists a chain $ p_{j_1}\leq p_{j_2}\leq\ldots \leq  p_{j_{i-1}}$ of elements of $P$ with $ p_{j_{i-1}} \leq p_j$ and $c_{t,j_t}=0$, for $t=1,\dots,i-1$.
\end{enumerate}
One should notice that $A$ includes the set $\{(1,j)\in V \: c_{1,j}\neq 0 \}$. First, we show that $A$ is a vertex cover of $\Delta_r(P)$. Suppose $F$ is a facet of $\Delta_r(P)$. So there exists the chain $p_{j_1}\leq p_{j_2}\leq\ldots \leq  p_{j_r}$ of elements of $P$ such that $$F=\{(1,j_1),(2,j_2),\ldots,(r,j_r)\}.$$
Since $\cb$ is a cover of $\Delta_r(P)$ of positive order, the set $D=\{(i,j_i)\in F \: c_{ij_i}\neq 0\}$ is  nonempty. Suppose $t=\min\{s\:(s,j_s)\in D \}$. Then $(t,j_t)$ satisfies the property (i) of elements of $A$. Considering the chain $p_{j_1}\leq p_{j_2}\leq\ldots \leq  p_{j_{t-1}}$, one can see $(t,j_t)$ also satisfies the property (ii) of elements of $A$. Hence $(t,j_t)\in A$. This shows that $A$ is a vertex cover of $\Delta_r(P)$. Let the $(r \times m)$-matrix $[a_{ij}]$ be the squarefree 1-cover of $\Delta_r(P)$ corresponding to $A$. In other words, $[a_{ij}]$  is the $(0,1)$-matrix with $a_{ij}=1$  if and only if $(i,j)\in A$.

Next we show that for every chain $p_{j_1}\leq p_{j_2}\leq\ldots \leq  p_{j_r}$ of elements of $P$ with  $a_{tj_t}>0$, we have $\sum_{s=t}^r c_{s,j_s}\geq k$. Indeed, since $(t,j_t)\in A$,  property (ii) implies that  there exists a chain $ p_{i_1}\leq p_{i_2}\leq\ldots \leq  p_{i_{t-1}}$ of elements of $P$ with $ p_{i_{t-1}} \leq p_{j_t}$ and $c_{s,i_s}=0$, for $s=1,\dots,t-1$. Consider the facet $F$  of $\Delta_r(P)$ corresponding to  the chain $$p_{i_1}\leq p_{i_2}\leq\ldots \leq  p_{i_{t-1}}\leq p_{j_t}\leq\ldots \leq p_{j_r}.$$
Since $\cb$ is a $k$-cover of $\Delta_r(P)$, one has $\sum_{(i,j)\in F}c_{ij}\geq k$. This implies that
$$\sum_{s=1}^{t-1} c_{s,i_s}+\sum_{s=t}^r c_{s,j_s}=\sum_{s=t}^r c_{s,j_s}\geq k.$$

\medskip
Finally, we show that the $(r \times m)$-matrix $[b_{ij}]=[c_{ij}-a_{ij}]$ is a  $(k-1)$-cover of $\Delta_r(P)$. To see this, let $F$ be a facet of $\Delta_r(P)$ corresponding to a  chain $p_{j_1}\leq p_{j_2}\leq\ldots \leq  p_{j_r}$ of elements of $P$. Since $A$ is a vertex cover of $\Delta_r(P)$, the set $A\cap F$ is nonempty. Suppose $A\cap F=\{(i_1,j_{i_1}),(i_2,j_{i_2}),\ldots,(i_t,j_{i_t})\}$ with $i_1< i_2<\ldots< i_t$. By the above discussion, since $a_{i_t j_{i_t}}>0$, we have $\sum_{s=i_t}^r c_{s,j_s}\geq k$. Furthermore, the elements $a_{i_1 j_{i_1}},a_{i_1,j_{i_1}},\ldots,a_{i_{t-1},j_{i_{t-1}}}$ are nonzero because $$\{(i_1,j_{i_1}),(i_2,j_{i_2}),\ldots,(i_{t-1},j_{i_{t-1}})\}\subseteq A.$$
 This implies $c_{i_1 j_{i_1}},c_{i_2,j_{i_2}},\ldots,c_{i_{t-1},j_{i_{t-1}}}$ are nonzero. Hence  $$\sum_{(i,j)\in F}c_{ij}\geq \sum _{s=1}^{t-1} c_{i_s,j_{i_s}} +\sum_{s=i_t}^r c_{s,j_s}\geq (t-1)+\sum_{s=i_t}^r c_{s,j_s}\geq (t-1)+k.$$
Consequently, $$\sum_{(i,j)\in F}b_{ij}=\sum_{(i,j)\in F}c_{ij}-\sum_{(i,j)\in F}a_{ij} \geq ((t-1)+k)-t=k-1.$$ It shows that $\bb$ is a $(k-1)$-cover of $\Delta_r(P)$, and $\cb=\ab+\bb$ is the desired decomposition of $\cb$.
\end{proof}

\section{Comparison of $A(\Delta)$ and $B(\Delta)$}

In view of Corollary~\ref{duality} it is of interest to know when $B(\Delta)$ is standard graded as an $S$-algebra. Of course this is the case if $A(\Delta)$ is standard graded. Thus the following questions arise:
\begin{enumerate}
\item[(1)] Is $A(\Delta)$ standard graded if and only if $B(\Delta)$ is standard graded?

\item[(2)] When do we have  $A(\Delta)=B(\Delta)$?
\end{enumerate}

In general Question (1) does not have a positive answer. The following example was communicated to us by Villarreal. Let $\Delta$ be the simplicial complex with the following facets:
\[
\{1,2\},  \{3,4\},  \{5,6\},  \{7,8\},  \{1,3,7\},  \{1,4,8\},  \{3,5,7\},  \{4,5,8\},  \{2,3,6,8\},  \{2,4,6,7\}
\]
It can be seen  that $\cb=(1,1,1,1,2,0,1,1)$ is an indecomposable 2-cover of $\Delta$, and hence $A(\Delta)$ is not standard graded. However  using   CoCoA \cite{Cocoa},  one can check that the (finitely many) squarefree covers of $\Delta$ of order $\geq 2$ are all decomposable. Thus $B(\Delta)$ is standard graded.

\medskip
Now we consider some cases where Question (1)   has a positive answer. We begin with a general fact about the generators of $B(\Delta)$.

\begin{Lemma}
\label{lunch}
Let $r=\min\{|F|\:\; F\in \F(\Delta)\}$. Then $B(\Delta)$ is generated in degree $\leq r$,  and $L_r(\Delta)^{sq}=(x_1x_2\cdots x_n)$ if and only if each vertex is contained in a $(r-1)$-facet of $\Delta$.
\end{Lemma}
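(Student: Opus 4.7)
The plan is to handle the two assertions separately, both by direct bookkeeping arguments from the definition of a squarefree $k$-cover. For the first claim that $B(\Delta)$ is generated in degree $\leq r$, I would observe that a squarefree $k$-cover $\cb$ must satisfy $\sum_{i \in F} c_i \geq k$ for every facet $F$; choosing a facet $F^\star$ of minimum size $r$, the sum on the left is bounded above by $|F^\star|=r$ since each $c_i \in \{0,1\}$, forcing $k \leq r$. Since the $S$-algebra generators of $B(\Delta)$ are by definition the monomials $x^{\cb}t^k$ with $\cb$ a squarefree $k$-cover, no generator can live in degree $k > r$.

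For the equivalence, I would prove both directions by exhibiting the relevant squarefree covers. For $(\Leftarrow)$, assume every vertex lies in some facet of size $r$ and let $\cb$ be an arbitrary squarefree $r$-cover. For a fixed vertex $i$, pick a facet $F$ of size $r$ containing $i$; then $\sum_{j \in F} c_j \geq r = |F|$ together with $c_j \in \{0,1\}$ forces $c_j = 1$ for every $j \in F$, and in particular $c_i = 1$. Since $i$ was arbitrary, the only squarefree $r$-cover is $(1,\ldots,1)$ (which is indeed a cover, as every facet has at least $r$ vertices), so $L_r(\Delta)^{sq} = (x_1 x_2 \cdots x_n)$.

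For $(\Rightarrow)$ I would argue the contrapositive: suppose some vertex $v$ belongs to no facet of size $r$, and construct $\cb$ with $c_v = 0$ and $c_i = 1$ for $i \neq v$. The one step that requires care is checking that $\cb$ is a squarefree $r$-cover: for a facet $F$ with $|F| = r$ one has $v \notin F$ by hypothesis, so $\sum_{i \in F} c_i = r$; for a facet with $|F| > r$ one has $\sum_{i \in F} c_i \geq |F| - 1 \geq r$. Hence $x^{\cb} \in L_r(\Delta)^{sq}$, but $x^{\cb}$ is not divisible by $x_v$, and therefore $x^{\cb} \notin (x_1 x_2 \cdots x_n)$, contradicting the assumed equality. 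I do not anticipate any real obstacle here; the whole argument rests on the single observation that a sum of $\{0,1\}$-entries over an $r$-element set can be $\geq r$ only when every entry equals $1$.
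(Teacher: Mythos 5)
Your proof is correct and takes essentially the same route as the paper: the degree bound comes from intersecting the support of a squarefree cover with a facet of minimal cardinality $r$, and the equivalence rests on the observation that $(x_1x_2\cdots x_n)\subseteq L_r(\Delta)^{sq}$ always holds, with equality failing precisely when some vector with a single zero entry at a vertex $v$ is still an $r$-cover, i.e., when $v$ lies only in facets of cardinality greater than $r$. No gaps.
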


\begin{proof} Let $\cb$ be a squarefree cover of $\Delta$, and $F\in \F(\Delta)$ for which $|F|=r$. If $C=\supp(\cb)$, then $|C\cap F|\leq r$. Hence the order of $\cb$ is at most $r$. This shows that $B(\Delta)$ is generated in degree $\leq r$. For the other statement we first observe that by the definition of the integer $r$ we have  $(x_1x_2\cdots x_n)\subseteq  L_r(\Delta)^{sq}$.  Therefore  $(x_1x_2\cdots x_n)\neq L_r(\Delta)^{sq}$ if and only if there exists $i\in[n]$ such that $x_1\cdots \hat{x}_i\cdots x_n\in L_r(\Delta)^{sq}$. This is the case if and only  if for each facet $F\in\ \Delta$ with $i\in F$ one has $|F\cap\{1,\ldots,\hat{i},\ldots,n\}|\geq r$ which implies that $|F|>r$.
\end{proof}

\medskip
When $\dim\Delta=1$, one may view $\Delta$ as a finite simple graph on $[n]$. In that case we have

\begin{Proposition}
\label{standardgraph}
Let $G$ be a finite simple graph. Then  $B(G)$ is  standard graded  if and only if  $A(G)$ is  standard graded.
\end{Proposition}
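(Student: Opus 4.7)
The plan is to prove the two directions separately. The direction $A(G)$ standard graded $\Longrightarrow$ $B(G)$ standard graded is immediate for any simplicial complex: given a squarefree $k$-cover $\cb$ of $G$, the standard gradedness of $A(G)$ yields a decomposition $\cb=\ab_1+\cdots+\ab_k$ into $1$-covers $\ab_i$ of $G$. Since each $\ab_i$ has nonnegative integer entries and their sum $\cb$ is a $0$-$1$ vector, each $\ab_i$ is itself a $0$-$1$ vector, hence a squarefree $1$-cover; so $B(G)$ is standard graded.

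For the reverse direction, assume $B(G)$ is standard graded. The key observation specific to graphs is that for $k\geq 2$ the constraint $c_i+c_j\geq k$ with $c_i,c_j\in\{0,1\}$ on every edge $\{i,j\}$ forces $c_i=c_j=1$. Consequently, provided $G$ has no isolated vertices and at least one edge, the only squarefree $2$-cover of $G$ is the all-ones vector $\mathbf{1}=(1,\dots,1)$, and no squarefree $k$-cover exists for $k\geq 3$. Standard gradedness of $B(G)$ in degree $2$ therefore yields a decomposition $\mathbf{1}=\ab_1+\ab_2$ with $\ab_1,\ab_2$ squarefree $1$-covers. Writing $V_i=\supp(\ab_i)$, one has $V_1\sqcup V_2=[n]$ with both $V_1,V_2$ vertex covers; since these are disjoint and each meets every edge, every edge has exactly one endpoint in each $V_i$, so $G$ is bipartite with bipartition $V_1\sqcup V_2$.

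To finish the reverse direction I would invoke the classical fact that $A(G)$ is standard graded for any bipartite graph $G$: this can be read off the general criterion of Herzog--Hibi--Trung in \cite{HHT}, and is equivalent to the statement that the cover ideal of a bipartite graph has all symbolic powers equal to its ordinary powers. The main obstacle I anticipate is not the combinatorial extraction of bipartiteness above, which is short, but rather pinning down the cleanest reference for the bipartite case. Alternatively, one can give a direct inductive decomposition, producing at each step a $1$-cover $\ab\leq\cb$ with $\cb-\ab$ a $(k-1)$-cover, by exploiting that the subgraph of edges with $c_i+c_j=k$ is bipartite and admits a $2$-coloring compatible with the positivity constraints $a_i\leq c_i$. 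The degenerate cases (no edges, or graphs with isolated vertices, which are facets of $\Delta$) must be treated separately but are easy.
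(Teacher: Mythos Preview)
Your proposal is correct and follows essentially the same route as the paper: after discarding isolated vertices, apply standard gradedness of $B(G)$ to the squarefree $2$-cover $\mathbf{1}=(1,\dots,1)$ to obtain a bipartition $V_1\sqcup V_2$ of the vertex set into disjoint vertex covers, conclude that $G$ is bipartite, and then invoke \cite[Theorem~5.1(b)]{HHT} to get that $A(G)$ is standard graded. Your write-up is slightly more explicit than the paper's (you spell out the easy direction and why $\mathbf{1}$ is the only squarefree $2$-cover), and your aside about isolated vertices being facets is unnecessary here since the ambient assumption is $\dim\Delta=1$, but the argument is the same.
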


\begin{proof}
We may assume that $G$ has no isolated vertices, because adding or removing an  isolated vertex to $G$ does not change the property of $B(G)$,  respectively of  $A(G)$, to be standard graded.

Assume $B(G)$  is standard graded. Then $\cb=(1,\ldots,1)$ is decomposable, or in other words, there exist vertex covers  $C_1,C_2\subseteq [n]$ such that $C_1\union C_2=[n]$ and $C_1\sect C_2=\emptyset$. This implies that each edge of $G$ has exactly one vertex in $C_1$ and one in $C_2$. Therefore $G$ is bipartite. By Theorem~\cite[Theorem5.1.(b)]{HHT} it follows that $A(G)$ is standard graded.
\end{proof}

Let $G$ be a finite simple graph on $[n]$. The {\em edge ideal} of $G$, denoted by $I(G)$, is the ideal generated by $\{x_ix_j\: \{i,j\}\text{ is an edge of } G\}$. We also denote $I(G)^\vee$ by $J(G)$ which is called the {\em cover ideal} of $G$.

An ideal $I$ is said to be {\em normally torsion free} if all the powers $I^j$ have the same associated prime ideals. The following result is shown in  \cite[Theorem 5.9]{SVV}.
\begin{Theorem}[Simis, Vasconcelos, Villarreal]
\label{Villarreal}
Let G be a graph, and suppose $I(G)$ is its edge ideal. The following conditions are equivalent:
\begin{enumerate}
\item[(i)] $G$ is bipartite;
\item[(ii)] $I(G)$ is normally torsion free.
\end{enumerate}
\end{Theorem}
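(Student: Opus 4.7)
The plan is to reformulate normal torsion freeness in terms of equality of symbolic and ordinary powers and then handle the two directions separately. Since $I(G)$ is a squarefree monomial ideal, $\Ass(S/I(G))$ coincides with the set of its minimal primes, which are the $P_D=(x_v:v\in D)$ for $D$ a minimal vertex cover of $G$. It follows (by the absence of embedded primes at every power) that condition (ii) is equivalent to the equality $I(G)^k=I(G)^{(k)}$ for every $k\geq 1$, where $I(G)^{(k)}=\bigcap_D P_D^k$ is the monomial ideal consisting of those $x^a$ with $\sum_{v\in D}a_v\geq k$ for every minimal vertex cover $D$.

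For $(i)\Rightarrow(ii)$, I would prove the nontrivial inclusion $I(G)^{(k)}\subseteq I(G)^k$ via linear programming duality. Membership $x^a\in I(G)^k$ is equivalent to the existence of nonnegative integers $\lambda_e$ indexed by the edges of $G$ satisfying $\sum_{e\ni v}\lambda_e\leq a_v$ for all $v$ and $\sum_e\lambda_e\geq k$. The LP relaxation of this feasibility problem has the fractional vertex-cover LP with weights $a_v$ as its dual, and the hypothesis $x^a\in I(G)^{(k)}$ says precisely that the \emph{integer} dual optimum is at least $k$. For bipartite $G$ the vertex--edge incidence matrix is totally unimodular---equivalently, the K\"onig--Egerv\'ary theorem applies---so both primal and dual LPs admit integral optimizers, producing the required integer $\lambda_e$.

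For $(ii)\Rightarrow(i)$ I would argue the contrapositive. If $G$ is not bipartite, pick any odd cycle $C$ in $G$ of length $2k+1$, with vertex set $V(C)=\{v_0,v_1,\ldots,v_{2k}\}$, and set $m=x_{v_0}x_{v_1}\cdots x_{v_{2k}}$. For every minimal vertex cover $D$ of $G$, the intersection $D\cap V(C)$ covers the cycle $C$, so its size is at least the vertex cover number $k+1$ of $C_{2k+1}$; hence $\sum_{v\in D}(\text{exponent of }x_v\text{ in }m)=|D\cap V(C)|\geq k+1$, showing $m\in I(G)^{(k+1)}$. On the other hand every monomial of $I(G)^{k+1}$ has degree at least $2(k+1)$, whereas $\deg m=2k+1$, so $m\notin I(G)^{k+1}$. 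This witnesses $I(G)^{(k+1)}\neq I(G)^{k+1}$, contradicting (ii). The main obstacle is the integrality step for bipartite graphs: unpacking the definitions and carrying out the degree count are routine, but passing from fractional to integral LP optima is the essential combinatorial content.
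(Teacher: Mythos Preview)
The paper does not supply its own proof of this theorem: it is quoted as a result of Simis, Vasconcelos and Villarreal with a reference to \cite[Theorem~5.9]{SVV}, and is then used as a black box inside the proof of Proposition~\ref{coveringIdeal}. So there is nothing in the paper to compare your argument against.

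That said, your plan is correct and essentially follows the standard route. The reduction of normal torsion freeness for a squarefree monomial ideal to the equality $I^k=I^{(k)}$ for all $k$ is valid, since $\Ass(S/I)=\Min(I)$ and the minimal primes $P_D$ satisfy $P_D^{(k)}=P_D^k$. Your direction $(i)\Rightarrow(ii)$ via LP duality and total unimodularity of the bipartite incidence matrix is the weighted K\"onig--Egerv\'ary statement and is exactly how the original proof in \cite{SVV} proceeds (phrased there in terms of integer programming / max-flow min-cut). Your direction $(ii)\Rightarrow(i)$ via the monomial $m=\prod_{v\in V(C)}x_v$ supported on an odd cycle is also the classical obstruction: the counts $|D\cap V(C)|\geq k+1$ and $\deg m=2k+1<2(k+1)$ are both correct. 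The only point worth stating a bit more carefully is the equivalence between $x^a\in I(G)^k$ and the existence of the integer edge weights $\lambda_e$; this is immediate from the fact that the generators of $I(G)^k$ are products of $k$ edge monomials, but it is the hinge on which the LP reformulation turns.
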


By this theorem, we have another case for which the question (1) has a positive answer; as shown in the next proposition.
\begin{Proposition}
\label{coveringIdeal}
Let $G$ be a graph, and suppose $\Delta$ is the simplicial complex  with $I(\Delta)=J(G)$. Then  $B(\Delta)$ is  standard graded  if and only if $A(\Delta)$ is  standard graded.
\end{Proposition}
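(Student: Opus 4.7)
The plan is to show that both ``$A(\Delta)$ is standard graded'' and ``$B(\Delta)$ is standard graded'' are equivalent to $G$ being bipartite, with Theorem~\ref{Villarreal} as the pivot.

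First I identify the combinatorics of $\Delta$. Since $I(\Delta)=J(G)=I(G)^\vee$, the facets of $\Delta$ are exactly the minimal vertex covers of $G$. Then \cite[Lemma 4.1]{HHT} together with the primary decomposition of the edge ideal yields
\[
J_k(\Delta)=\bigcap_W P_W^k=I(G)^{(k)}
\]
for every $k$, where $W$ runs over the minimal vertex covers of $G$; in particular $L_1(\Delta)=J_1(\Delta)=I(G)$. Consequently $A(\Delta)$ is standard graded iff $I(G)^{(k)}=I(G)^k$ for all $k$, i.e., iff $I(G)$ is normally torsion free; by Theorem~\ref{Villarreal} this holds iff $G$ is bipartite.

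Turning to $B(\Delta)$, Proposition~\ref{alsoeasy} says that $B(\Delta)$ is standard graded iff $\bigcap_W P_W^{\langle k\rangle}=I(G)^{\langle k\rangle}=(I(G)^k)^{sq}$ for every~$k$. A routine check of squarefree monomials shows that $\bigcap_W P_W^{\langle k\rangle}=(I(G)^{(k)})^{sq}$, because both are squarefree monomial ideals whose squarefree elements are precisely the monomials $u$ with $|\supp(u)\cap W|\geq k$ for all $W$. So $B(\Delta)$ is standard graded iff $(I(G)^{(k)})^{sq}=(I(G)^k)^{sq}$ for all $k$. If $G$ is bipartite, Theorem~\ref{Villarreal} gives $I(G)^{(k)}=I(G)^k$, and passing to squarefree parts is immediate, so $B(\Delta)$ is standard graded.

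For the harder direction I argue by contrapositive: assume $G$ is not bipartite and produce an explicit obstruction. Pick an odd cycle $C\subseteq G$ of length $2s+1$ and set $u=\prod_{v\in V(C)}x_v$. Any minimal vertex cover $W$ of $G$ covers every edge of the subgraph $C$, so $W\cap V(C)$ is a vertex cover of $C$; since $\tau(C_{2s+1})=s+1$, we obtain $|V(C)\cap W|\geq s+1$ for every such $W$. Hence $u\in L_{s+1}(\Delta)^{sq}$. However $\deg u=2s+1<2(s+1)$, while every monomial in $I(G)^{s+1}=L_1(\Delta)^{s+1}$ has degree at least $2(s+1)$, so $u\notin L_1(\Delta)^{s+1}$. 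Therefore $B(\Delta)$ is not standard graded, closing the loop of equivalences.

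The main obstacle is precisely this last step: $B(\Delta)$ only sees squarefree data, while Villarreal's criterion concerns the full symbolic powers, so one needs an obstruction that survives passage to squarefree parts. The product of the vertices of a shortest odd cycle is the natural squarefree witness---it lies in $I(G)^{(s+1)}$ but has degree strictly less than anything in $I(G)^{s+1}$.
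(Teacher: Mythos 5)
Your proof is correct and follows essentially the same route as the paper's: the decisive step in both is that the product of the vertices of an odd cycle of $G$ is a squarefree $(s+1)$-cover of $\Delta$ that cannot decompose (the paper phrases the obstruction as the impossibility of $s+1$ pairwise disjoint edges inside the cycle, you as a degree count against $I(G)^{s+1}$ --- the same parity argument), and both directions then pivot on Theorem~\ref{Villarreal}. The only cosmetic difference is that you derive ``$A(\Delta)$ standard graded $\iff$ $I(G)$ normally torsion free'' directly from $J_k(\Delta)=I(G)^{(k)}$, where the paper cites this implication from the literature.
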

\begin{proof}
Suppose $B(\Delta)$ is   standard graded. We show that $G$ is bipartite. By contrary, assume that $G$ has a cycle $l\:\;i_1,i_2,\ldots,i_r$ of odd length. Every facet $F$ of $\Delta$ is a vertex cover of the graph $G$. In addition, $r$ is an odd number, so we have
\begin{eqnarray}
\label{shokr}
 |F\cap  \{i_1,i_2,\ldots ,i_r\} |\geq (r+1)/2.
\end{eqnarray}
for every facet $F$ of $\Delta$. Indeed, if inequality (\ref{shokr}) does not hold, then there  exists an edge of  the cycle $l$ which does not meet $F$.

 Consider the squarefree vector $\cb=(c_1,\ldots,c_n)$ with $c_j=1$ if and only if $j\in C=\{i_1,i_2,\ldots,i_r\}$. Inequality (\ref{shokr}) shows that $\cb$ is a $(r+1)/2$-cover of $\Delta$. Thus the set $C$ is a disjoint union of $(r+1)/2$ vertex covers of $\Delta$ because $B(\Delta)$ is   standard graded. Observe that the  minimal vertex covers of $\Delta$ are exactly the edges of the graph $G$. So the above argument implies that the set of edges of the cycle $l$ contains $(r+1)/2$ pairwise disjoint edges, a contradiction. Hence $G$ is bipartite. So by Theorem~\ref{Villarreal}, the ideal $I(G)$ is normally torsion free. It is well known that in this case the simplicial complex with facet ideal $I(G)^\vee=J(G)$, i.e. $\Delta$, has the standard graded vertex cover algebra; see \cite[Corollary 3.14]{GVV}, \cite[Corollary 1.5 and 1.6]{XIN}    and \cite{HSV}.
\end{proof}

Next we discuss another result regarding question (1).

\medskip
Let $\Delta$ be a simplicial complex. A {\em subcomplex} $\Gamma$ of $\Delta$, denoted by $\Gamma \subseteq \Delta$, is a simplicial complex such that
$\F(\Gamma) \subseteq \F(\Delta)$.
A {\em cycle} of length $r$ of $\Delta$ is a sequence $i_1,F_1,i_2,\ldots , F_r,i_{r+1}=i_1$ where $F_j\in \F(\Delta)$, $i_j \in [n]$ and $v_j,v_{j+1}\in F_j$ for $j=1,\ldots , j=r$.
A cycle is called {\em special} if each facet of the cycle contains exactly two vertices of the cycle.

\begin{Theorem}
\label{no-odd}
Let $\Delta$ be a simplicial complex. Then the following conditions are equivalent:
\begin{enumerate}
\item[(i)] The vertex cover algebra $B(\Gamma)$ is standard graded for all $\Gamma \subseteq \Delta;$
\item[(ii)] The vertex cover algebra $A(\Gamma)$ is standard graded for all $\Gamma \subseteq \Delta;$
\item[(iii)] $\Delta$ has no special odd cycles.
\end{enumerate}
\end{Theorem}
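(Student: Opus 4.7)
The plan is to establish the implications cyclically: (ii) $\Rightarrow$ (i) $\Rightarrow$ (iii) $\Rightarrow$ (ii). The first and third are short, while the middle one carries the bulk of the new argument.

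For (ii) $\Rightarrow$ (i) the key observation is that every minimal $1$-cover is automatically squarefree. Indeed, if $\cb$ is a $1$-cover of $\Gamma$ with some $c_i\geq 2$, then decrementing $c_i$ by $1$ still produces a $1$-cover, because every facet $F$ containing $i$ satisfies $\sum_{j\in F}c_j\geq c_i\geq 2$, and the resulting vector is still nonzero. Hence the $S$-module generators of $J_1(\Gamma)$ are squarefree and $A(\Gamma)_1=B(\Gamma)_1$. When $A(\Gamma)$ is standard graded, this equality forces $A(\Gamma)=S[A(\Gamma)_1]=S[B(\Gamma)_1]\subseteq B(\Gamma)\subseteq A(\Gamma)$, giving $A(\Gamma)=B(\Gamma)$; in particular $B(\Gamma)$ is standard graded.

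For (iii) $\Rightarrow$ (ii), I will appeal to the theorem of Herzog--Hibi--Trung from \cite{HHT} characterizing standard-gradedness of vertex cover algebras: the absence of special odd cycles is clearly inherited by subcomplexes, and under this hypothesis their result delivers standard-gradedness of $A(\Gamma)$ for every $\Gamma\subseteq\Delta$.

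The main work is (i) $\Rightarrow$ (iii). Assume $\Delta$ carries a special odd cycle $i_1,F_1,i_2,F_2,\ldots,F_r,i_{r+1}=i_1$ with $r$ odd, and let $\Gamma$ be the subcomplex of $\Delta$ with facet set $\{F_1,\ldots,F_r\}$. Define the squarefree vector $\cb$ with $\supp(\cb)=\{i_1,\ldots,i_r\}$. Speciality of the cycle gives $F_j\cap\supp(\cb)=\{i_j,i_{j+1}\}$ for each $j$, so $\sum_{k\in F_j}c_k=2$, making $\cb$ a squarefree $2$-cover of $\Gamma$. Any decomposition $\cb=\cb'+\cb''$ into squarefree $1$-covers of $\Gamma$ would partition $\{i_1,\ldots,i_r\}$ into two sets each meeting every consecutive pair $\{i_j,i_{j+1}\}$, which amounts to a proper $2$-coloring of the odd cycle graph on $i_1,\ldots,i_r$, and is therefore impossible. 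Thus $\cb$ is an indecomposable squarefree $2$-cover of $\Gamma$, so $B(\Gamma)$ is not standard graded and (i) fails. The only technical nuisance I anticipate is the case where some $F_j$ repeat; but since each $F_j$ is by definition a facet of $\Delta$ and hence maximal, the subcomplex $\Gamma$ with facets $\{F_1,\ldots,F_r\}$ is unambiguously defined and the argument above transfers verbatim.
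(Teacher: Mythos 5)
Your proposal is correct and follows essentially the same route as the paper: the implication (i)$\Rightarrow$(iii) via the squarefree $2$-cover supported on a special odd cycle and the impossibility of $2$-coloring an odd cycle is exactly the paper's argument, the appeal to the known characterization of standard gradedness handles (iii)$\Rightarrow$(ii) just as in the paper (though the relevant reference is Herzog--Hibi--Trung--Zheng, \emph{Standard graded vertex cover algebras, cycles and leaves}, Theorem~2.2, i.e.\ \cite{XIN} rather than \cite{HHT}), and your explicit proof of (ii)$\Rightarrow$(i) via $A(\Gamma)_1=B(\Gamma)_1$ merely spells out what the paper treats as immediate. No gaps.
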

\begin{proof}
The equivalence of (ii) and (iii) is known by \cite[Theorem 2.2]{XIN}; see also \cite[Proposition 4.10]{GRV}. We show (i) and (iii) are also equivalent.

(i)$\Rightarrow$ (iii) Following the proof of \cite[Lemma 2.1]{XIN}, let $i_1,F_1,i_2,\ldots , F_r,i_{r+1}=i_1$ be a special cycle of $\Delta$. Consider the subcomplex $\Gamma \subseteq \Delta$ with $\F(\Gamma)=\{F_1,\ldots , F_r\}$. By the definition of the special cycles, one has $|F_j\cap \{i_1,\ldots, i_r\}|= 2$ for each $j=1,\ldots,r$. So
$\{i_1,\ldots, i_r\}$ corresponds to a squarefree 2-cover of $\Gamma$, that is, there exists a squarefree 2-cover $\cb$ of $\Gamma$ with $\supp(\cb)=\{i_1,\ldots, i_r\}$. Since by assumption $B(\Gamma)$ is standard graded, there are disjoint vertex covers $C_1$ and $C_2$ of $\Gamma$ such that $\{i_1,\ldots, i_r\}=C_1\cup C_2$. So the sets $F_j\cap C_1$ and $F_j\cap C_2$ are nonempty for all $j$. Furthermore, since  every facet of a special cycle contains exactly two vertices of the cycle,  it follows that  $|F_j\cap C_1|=|F_j\cap C_2|=1$. Hence $C_1$ and $C_2$ have the same number of vertices which implies that $r$ is an even number.

(iii)$\Rightarrow$ (i) By \cite[Theorem 2.2]{XIN}, when $\Delta$ has no special odd cycle, statement (ii) holds. Therefore  $B(\Gamma)$ is standard graded for all $\Gamma \subseteq \Delta$.
\end{proof}

Next we discuss question (2) in which we ask when $B(\Delta)=A(\Delta)$. In the case that $\dim \Delta=1$, in other words if $\Delta$ can be identified with a graph $G$, we have a complete answer which is an immediate consequence of the following result, see
\cite[Proposition 5.3]{HHT}:
\begin{Proposition}
\label{quoted}
Let G be a simple graph on $[n]$. Then the following conditions are equivalent:

\begin{enumerate}
\item[(i)] The graded $S$-algebra $A(G)$ is generated by $x_1x_2\cdots x_nt^2$ together with those monomials $x^{\cb}t$ where $c$ is a squarefree 1-cover of $G$;
\item[(ii)] For every cycle $C$ of $G$ of odd length and for every vertex $i$ of $G$ there exist a vertex $j$ of the cycle $C$ such that $\{i,j\}$ is an edge of $G$.
\end{enumerate}
\end{Proposition}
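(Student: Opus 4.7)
The plan is to analyze the minimal generators of $A(G)$ as an $S$-algebra, which correspond to vectors $\cb$ that are minimal in $\NN^n$ among $k$-covers and cannot be written as a sum of two covers of positive orders summing to $k$. The central structural observation is the following: for any independent set $T\subseteq V(G)$, set $Q=V\setminus N[T]$ and consider the vector $\cb$ with $c_i=0$ on $T$, $c_j=2$ on $N(T)$, and $c_v=1$ on $Q$. A check over edge types shows that $\cb$ is a $2$-cover. In any decomposition $\cb=\ab+\bb$ into two $1$-covers, the edges between $T$ and $N(T)$ force $\ab=\bb=\mathbf{1}$ on $N(T)$ (since $a_i=b_i=0$ on $T$), while on $Q$ the two vectors take $\{0,1\}$-values whose supports must be two independent sets in $G[Q]$ partitioning $Q$. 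Hence $\cb$ is indecomposable precisely when $G[Q]$ contains an odd cycle.

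For (ii) $\Rightarrow$ (i), let $x^{\cb}t^k$ be a minimal generator of $A(G)$ with $k\geq 2$ and put $T=\{i : c_i=0\}$. If $T=\emptyset$, then all $c_i\geq 1$: for $k\geq 3$, $\cb=\mathbf{1}+(\cb-\mathbf{1})$ decomposes $\cb$ as a $2$-cover plus a nonzero $(k-2)$-cover, contradicting indecomposability; for $k=2$, minimality in $\NN^n$ forces every $c_i=1$ (any $c_i\geq 2$ could be lowered), giving $\cb=\mathbf{1}$. If $T\neq\emptyset$, choose $i_0\in T$: by (ii), every odd cycle of $G$ meets $N(i_0)\subseteq N(T)$, so $G-N(T)$, and a fortiori $G[Q]$, is bipartite, say $Q=A\cup B$. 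Using $c_j\geq k$ on $N(T)$ and $c_v\geq 1$ on $Q$, one verifies edge-by-edge that $\ab=\mathbf{1}_{N(T)\cup A}$ is a $1$-cover and $\cb-\ab$ is a $(k-1)$-cover, producing a decomposition that contradicts indecomposability.

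For (i) $\Rightarrow$ (ii), argue by contraposition. Assume (ii) fails, and fix an odd cycle $C$ of $G$ together with a vertex $i_0$ not adjacent to $V(C)$. Begin with $T_0=\{i_0\}$ and iteratively enlarge $T$ by adjoining any $w\in Q$ with $N(w)\subseteq N(T)$; each such step preserves $N(T)$ and removes exactly $w$ from $Q$, so the process terminates at an independent set $T^\ast$ for which the structural vector $\cb$ associated to $T^\ast$ is a minimal $2$-cover. An induction on the steps shows that no cycle vertex is ever adjoined, since each cycle vertex has a cycle neighbor still in $Q$ and hence not in $N(T)$; therefore $V(C)\subseteq Q^\ast$, so $G[Q^\ast]$ contains the odd cycle $C$. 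By the structural observation, $\cb$ is indecomposable, and since $c_{i_0}=0$ we have $\cb\neq\mathbf{1}$, producing a minimal generator of $A(G)$ in degree~$2$ different from $x_1\cdots x_nt^2$ and thereby violating (i).

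The main obstacle is the structural observation identifying indecomposability of these $2$-covers with non-bipartiteness of $G[Q]$, together with the verification that the iterative reduction in the converse direction preserves the odd cycle inside $Q$; the remaining arguments are routine edge-type checks.
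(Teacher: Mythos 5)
Your argument is correct in substance, but note that the paper does not prove this proposition at all: it is quoted verbatim from Herzog--Hibi--Trung \cite[Proposition 5.3]{HHT}, so any proof is by definition a different route from the paper's. What your self-contained argument supplies is an explicit description of the indecomposable $k$-covers of a graph for $k\geq 2$: the reduction to vectors $\cb$ that are $0$ on an independent set $T$, $2$ on $N(T)$ and $1$ on $Q=V\setminus N[T]$, with decomposability into two $1$-covers governed by bipartiteness of $G[Q]$, is exactly the combinatorial content hidden behind the citation. The edge-by-edge verifications (that $\ab=\mathbf{1}_{N(T)\cup A}$ is a $1$-cover and $\cb-\ab$ a $(k-1)$-cover when $G[Q]$ is bipartite, and the disposal of the case $T=\emptyset$ via $\cb=\mathbf{1}+(\cb-\mathbf{1})$ for $k\geq 3$ and minimality for $k=2$) all check out.

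One sentence of your ``structural observation'' is stated too strongly and should be repaired before being invoked: it is \emph{not} true that $\cb$ is indecomposable precisely when $G[Q]$ contains an odd cycle, because decomposability also allows splitting off a $0$-cover, i.e.\ $\cb=(\cb-e_v)+e_v$ whenever some entry can be lowered. Concretely, for $G$ with edges $\{1,2\},\{2,3\},\{1,3\},\{4,5\},\{5,6\}$ and $T=\{4\}$, the set $Q=\{1,2,3,6\}$ contains a triangle, yet $\cb=(1,1,1,0,2,1)$ is decomposable since $c_6$ can be dropped to $0$. The correct statement is that $\cb$ is indecomposable if and only if $G[Q]$ is non-bipartite \emph{and} every vertex of $Q$ has a neighbour in $Q$ (equivalently, $\cb$ is minimal among $2$-covers). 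Your iterative absorption into $T$ of the vertices $w$ with $N(w)\subseteq N(T)$ is precisely what restores this second condition, and since it never swallows a vertex of the odd cycle, the proof of (i)$\Rightarrow$(ii) goes through as written once the observation is stated in the corrected form; only the phrase ``precisely when'' needs to be weakened. (Minor bookkeeping: as in the proof of Proposition~\ref{standardgraph}, one should first discard isolated vertices so that $N(T)\neq\emptyset$ and the constructed covers are nonzero.)
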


By using this result we get

\begin{Proposition}
\label{graphequality}
Let G be a simple graph on $[n]$. Then the following conditions are equivalent:
\begin{enumerate}
\item[(i)] $A(G)=B(G);$
\item[(ii)] For every cycle $C$ of $G$ of odd length and for every vertex $i$ of $G$ there exist a vertex $j$ of the cycle $C$ such that $\{i,j\}$ is an edge of $G$.
\end{enumerate}
\end{Proposition}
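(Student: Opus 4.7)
My plan is to deduce the equivalence directly from Proposition~\ref{quoted} by first pinning down an explicit set of $S$-algebra generators of $B(G)$. Following the reduction used at the start of the proof of Proposition~\ref{standardgraph}, I would assume throughout that $G$ has no isolated vertices. This is harmless: if $v$ is an isolated vertex, then $\{v\}$ is a $0$-facet of $G$, which forces $c_v\geq k$ in every $k$-cover and so obstructs squarefreeness whenever $k\geq 2$. A short check shows that, in the presence of an isolated vertex, both (i) and (ii) collapse to the condition that the graph on the non-isolated vertices be bipartite, so the problem on $G$ reduces to the same problem after deleting the isolated vertices.

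Under this hypothesis we have $r:=\min\{|F|:F\in\F(G)\}=2$ and every vertex lies in some $(r{-}1)$-facet, so Lemma~\ref{lunch} provides both that $B(G)$ is generated as an $S$-algebra in degrees at most~$2$ and that $L_2(G)^{sq}=(x_1 x_2\cdots x_n)$. The latter is also transparent directly: on each edge $\{u,v\}$ a squarefree $2$-cover must satisfy $c_u+c_v\geq 2$ with $c_u,c_v\in\{0,1\}$, forcing $c_u=c_v=1$, so the only squarefree $2$-cover is $\mathbf{1}=(1,\ldots,1)$. In degree~$1$ one has $J_1(G)=L_1(G)$ as ideals, since any $1$-cover with an entry $\geq 2$ dominates a squarefree $1$-cover entry-wise. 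Combining these observations, $B(G)$ coincides with the $S$-subalgebra of $S[t]$ generated by $x_1 x_2\cdots x_n\,t^2$ together with the monomials $x^\cb t$ for $\cb$ a squarefree $1$-cover of $G$.

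With this explicit description in place the conclusion is immediate: because $B(G)\subseteq A(G)$ in general, the equality $A(G)=B(G)$ is equivalent to $A(G)$ being generated by exactly the list of monomials displayed in Proposition~\ref{quoted}(i), and that proposition identifies this in turn with condition~(ii). The only place requiring genuine care is the opening reduction to the case of no isolated vertices; once that bookkeeping is done, the heart of the argument (that the maximal-order squarefree cover is unique and equal to $\mathbf{1}$) is handed to us by Lemma~\ref{lunch}, and Proposition~\ref{quoted} supplies the rest.
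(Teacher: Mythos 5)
Your argument is correct and follows essentially the same route as the paper: reduce to the case of no isolated vertices, use Lemma~\ref{lunch} to identify the generators of $B(G)$ as $x_1x_2\cdots x_n t^2$ together with the squarefree $1$-cover monomials $x^{\cb}t$, and then invoke Proposition~\ref{quoted}. The additional care you take with the isolated-vertex reduction and the direct verification that $(1,\ldots,1)$ is the unique squarefree $2$-cover are fine (the paper simply asserts the reduction), but they do not change the substance of the proof.
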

\begin{proof}
(i)$\Rightarrow$ (ii): We may assume that $G$ has no isolated vertex. So by Lemma~\ref{lunch}, the graded $S$-algebra $B(G)$  is generated by $x_1x_2\cdots x_nt^2$ together with those monomials $x^\cb t$ where $\cb$ is a squarefree $1$-cover of $G$. Since we assume that $A(G)=B(G)$, the same holds true for $A(G)$. Hence (ii)  follows from Proposition~\ref{quoted}.

(ii)$\Rightarrow$ (i): If (ii) holds, then Proposition~\ref{quoted} implies that  $A(G)$ is  generated by the monomials corresponding to squarefree covers.  So $A(G)=B(G)$.
\end{proof}

 Let $\Delta$ be a simplicial complex on $[n]$. For a subset $W$ of $[n]$, we define the {\em restriction} of $\Delta$ with respect to  $W$, denoted by $\Delta_W$, to be  the subcomplex of $\Delta$ with
 $$\F(\Delta_W) = \{F\in \F(\Delta) \: F \subseteq W\}.$$

\begin{Lemma}
\label{restriction}
Let $\Delta$ be a simplicial complex on $[n]$ and $W\subseteq [n]$. If $B(\Delta)=A(\Delta)$, then $B(\Delta_W)=A(\Delta_W)$.
\end{Lemma}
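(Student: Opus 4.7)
The plan is to prove $A(\Delta_W)\subseteq B(\Delta_W)$, since the reverse inclusion is automatic. Both algebras are spanned by monomials, so it suffices to show that every monomial $x^{\cb}t^k\in A(\Delta_W)$ with $k\ge 1$ lies in $B(\Delta_W)$; here $\cb$ is a $k$-cover of $\Delta_W$. The strategy is to lift $\cb$ to a $k$-cover of the full complex $\Delta$, apply the hypothesis $A(\Delta)=B(\Delta)$ to factor the lifted monomial through squarefree generators, and then project the resulting identity back to $W$.

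First I would define $\cb'\in\NN^n$ by $c'_i=c_i$ for $i\in W$ and $c'_i=k$ for $i\notin W$, and check that $\cb'$ is a $k$-cover of $\Delta$: any facet $F\in\F(\Delta)$ with $F\subseteq W$ belongs to $\F(\Delta_W)$ and inherits $\sum_{i\in F}c'_i=\sum_{i\in F}c_i\ge k$, while any facet $F\not\subseteq W$ contains some $i\notin W$ whose contribution $c'_i=k$ alone suffices. Since $A(\Delta)$ is a monomial $S$-algebra generated by the monomials $x^{\db}t^{\ell}$ with $\db$ an indecomposable $\ell$-cover of $\Delta$, the monomial $x^{\cb'}t^k$ factors as
\[
x^{\cb'}t^k=u\cdot\prod_{j=1}^{s}x^{\db_j}t^{\ell_j},\qquad \sum_{j=1}^{s}\ell_j=k,
\]
for some $u\in S$ and indecomposable covers $\db_1,\ldots,\db_s$ of $\Delta$. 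The hypothesis $B(\Delta)=A(\Delta)$ then forces each $\db_j$ to be squarefree.

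To conclude I would restrict to $W$ using the $S$-algebra homomorphism $\pi\colon S[t]\to S[t]$ defined by $\pi(x_i)=x_i$ for $i\in W$, $\pi(x_i)=1$ for $i\notin W$, and $\pi(t)=t$. By construction $\pi(x^{\cb'})=x^{\cb}$, so applying $\pi$ yields
\[
x^{\cb}t^k=\pi(u)\prod_{j=1}^{s}\pi(x^{\db_j})\,t^{\ell_j}.
\]
Each $\pi(x^{\db_j})$ equals $x^{\widetilde{\db}_j}$, where $\widetilde{\db}_j$ is the projection of $\db_j$ to the coordinates in $W$ (zero outside). Since every $F\in\F(\Delta_W)$ is contained in $W$, we have $\sum_{i\in F}(\widetilde{\db}_j)_i=\sum_{i\in F}(\db_j)_i\ge\ell_j$, so $\widetilde{\db}_j$ is a squarefree $\ell_j$-cover of $\Delta_W$ (and automatically nonzero provided $\F(\Delta_W)\ne\emptyset$; the case $\F(\Delta_W)=\emptyset$ is trivial). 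Hence $x^{\cb}t^k\in B(\Delta_W)$.

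The only subtle step is the projection: one must confirm that each $\widetilde{\db}_j$ remains a legitimate cover of $\Delta_W$ of the correct order $\ell_j$. Both the covering inequality and the nonvanishing follow at once from the observation that $\F(\Delta_W)\subseteq 2^W$, so the facets of $\Delta_W$ are entirely unaffected by the projection to $W$.
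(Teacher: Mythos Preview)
Your proof is correct and follows essentially the same approach as the paper: both extend a cover of $\Delta_W$ to a cover of $\Delta$ by assigning the value $k$ to vertices outside $W$, invoke the hypothesis $A(\Delta)=B(\Delta)$ to obtain squarefree pieces, and then restrict back to $W$. The only cosmetic difference is that the paper peels off a single indecomposable squarefree summand and uses the assumed indecomposability of $\cb'$ to finish, whereas you factor completely into squarefree generators and project via the ring map $\pi$; the underlying idea is the same.
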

\begin{proof} We assume that there exists at least one facet $F$ of $\Delta$ such that $F \subseteq W$, otherwise there is nothing to prove. Furthermore, without loss of generality we assume that $W=\{1,\ldots,t\}$.  Let $\mathbf{c}'=(c_1,\ldots,c_t)$ be an indecomposable $k$-cover of  $\Delta_W$ with $k>0$. We will show that $\cb'$ is squarefree.
We  extend $\cb'$ to the $k$-cover $\cb=(c_1,\ldots,c_t,k,\ldots,k)$ of $\Delta$. Since $B(\Delta)=A(\Delta)$, there exists a decomposition of $\cb=\ab+\bb$ where $\ab$ is an indecomposable squarefree $i$-cover of $\Delta$ with $i>0$, $\bb$ is a $j$-cover of $\Delta$, and $i+j=k$.  Let  $\ab'$ and $\bb'$ be respectively the restrictions of $\ab$ and $\bb$ to the first $t$-components. For every facet $F\in \F(\Delta_W)$ we have $\sum_{\ell\in F}a_{\ell}\geq i>0$ This implies that $\ab'\neq 0$ . Suppose $\bb'\neq 0$, then $\cb'=\ab'+\bb'$ is a decomposition of $\cb'$, a contradiction. Hence $\bb'=0$, and so $\cb'=\ab'$ which means that $\cb'$ is squarefree.
\end{proof}

 By Theorem~\ref{no-odd}, so far we know that for a simplicial complex $\Delta$ without any special odd cycle we have $B(\Delta)=A(\Delta)$.  On the other hand, if $\Delta$ contains special odd cycles, then $A(\Delta)$ and $B(\Delta)$  may not be  equal. This may even happen if the facets of $\Delta$ are precisely the facets of a special odd cycle, as the following two examples demonstrate.  Figure~\ref{cycle5} shows a simplicial complex $\Delta_1$ of dimension 2 such that $$1,F_1,2,F_2,3,F_3,4,F_5,5,F_5,1$$ is a special odd cycle of length 5.
\begin{figure}
\begin{center}
\begin{pspicture}(0,-0.6)(5,3.6)
\psdots(0,0)(2,0)(4,0)(1,1.6)(3,1.6)(2,3.2)(5.25,2.8)
\pspolygon[fillcolor=light,fillstyle=solid](0,0)(1,1.6)(2,0)
\pspolygon[fillcolor=light,fillstyle=solid](2,0)(3,1.6)(4,0)
\pspolygon[fillcolor=light,fillstyle=solid](1,1.6)(2,3.2)(3,1.6)
\psline(2,3.2)(5.25,2.8)(4,0)
  \rput(2,3.6){2}
  \rput(0.6,1.6){1}
  \rput(3.4,1.6){7}
  \rput(0,-0.4){6}
  \rput(2,-0.4){5}
  \rput(4,-0.4){4}
  \rput(5.5,2.8){3}
  \rput(2,2.3){$F_1$}
  \rput(3,0.7){$F_4$}
  \rput(1,0.7){$F_5$}
  \rput(3.7,3.3){$F_2$}
  \rput(5,1.4){$F_3$}
\end{pspicture}
\end{center}
\caption{}\label{cycle5}
\end{figure}
One can see the vector $(1,0,2,0,1,0,1)$ is an indecomposable 2-cover of $\Delta_1$. Therefore $B(\Delta_1)\neq A(\Delta_1)$.

Next consider the simplicial complex $\Delta_2$ as shown in Figure~\ref{cycle3}. In this case the facets of $\Delta_2$ form the  special odd cycle $6,F_1,2,F_2,4,F_3,6$ of length 3 and the equality $B(\Delta_2)=A(\Delta_2)$ holds. Indeed, the equality $B(\Delta_2)=A(\Delta_2)$ is a consequence of a more general result given in  the next theorem.
\begin{figure}
\begin{center}
\begin{pspicture}(0,-0.6)(4,3.6)
\psdots(0,0)(2,0)(4,0)(1,1.6)(3,1.6)(2,3.2)
\pspolygon[fillcolor=light,fillstyle=solid](0,0)(1,1.6)(2,0)
\pspolygon[fillcolor=light,fillstyle=solid](2,0)(3,1.6)(4,0)
\pspolygon[fillcolor=light,fillstyle=solid](1,1.6)(2,3.2)(3,1.6)
  \rput(2,3.6){1}
  \rput(0.6,1.6){6}
  \rput(3.4,1.6){2}
  \rput(0,-0.4){5}
  \rput(2,-0.4){4}
  \rput(4,-0.4){3}
  \rput(2,2.3){$F_1$}
  \rput(3,0.7){$F_2$}
  \rput(1,0.7){$F_3$}
\end{pspicture}
\end{center}
\caption{}\label{cycle3}
\end{figure}

\medskip
These two examples  show that it is not easy to classify simplicial complexes $\Delta$  containing odd cycles for  which $B(\Delta)=A(\Delta)$. Therefore, we restrict ourselves to consider special classes of simplicial complexes. Firstly, we consider simplicial complexes  satisfying the following intersection property: let $\Delta$ be a simplicial complex with  $\mathcal{F}(\Delta)=\{F_1,\ldots,F_m\}$. We say that $\Delta$ has the {\em strict intersection property}  if
\begin{enumerate}
\item[ ($\text{I}_1$)] $|F_i\sect F_j|\leq 1$ for all $i\neq j$;
\item[ ($\text{I}_2$)] $F_i\sect F_j\sect F_k=\emptyset$ for pairwise distinct $i$,$j$ and $k$.
\end{enumerate}

Given a simplicial complex  $\Delta$ with $\mathcal{F}(\Delta)=\{F_1,\ldots,F_m\}$ satisfying the strict intersection property, we define the {\em intersection graph $G_\Delta$} of $\Delta$ as follows: $$V(G_\Delta)=\{v_1,\ldots,v_m\}$$ is the vertex set of $G_\Delta$, and $$E(G_\Delta)=\{ \{v_i,v_j\}\:\; i\neq j \quad \text{and}\quad F_i\sect F_j\neq \emptyset\}$$
is the edge set of $G_\Delta$.

Note that if  $W$ is a subset   of $[n]$,  then $\Delta_W$ satisfies again the strict intersection property and the  graph $G_{\Delta_W}$ is the  subgraph of $G_\Delta$ induced by
$$S=\{v_i\in V(G_{\Delta})\: F_i\subseteq W \}.$$

\begin{Lemma}
\label{odd-cycle}
Let $\Delta$ be a simplicial complex satisfying the strict intersection property. If $G_\Delta$ is an odd cycle, then $A(\Delta)$ is minimally generated in degree $1$ and $2$,  and   $B(\Delta)=A(\Delta)$.
\end{Lemma}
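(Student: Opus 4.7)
The plan is to exhibit the single distinguished squarefree $2$-cover $\cb^{\ast}=\chi_{\{v_{1},\dots,v_{m}\}}$ of $\Delta$, where $v_{i}$ denotes the unique vertex of $F_{i}\cap F_{i+1}$ (indices mod $m$), and to show that every $k$-cover of $\Delta$ lies in the $S$-subalgebra generated by $\cb^{\ast}t^{2}$ together with the squarefree $1$-covers of $\Delta$. Since all of these generators are squarefree of order at most $2$, this yields statements (a) and (b) simultaneously. That $\cb^{\ast}$ is itself a squarefree $2$-cover is immediate from the strict intersection property, which forces $F_{i}\cap\{v_{1},\dots,v_{m}\}=\{v_{i-1},v_{i}\}$, so each facet sum equals exactly $2$.

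The engine is a \emph{tree reduction}: if $\cb$ is a $k$-cover of $\Delta$ with $c_{v_{s}}=0$ for some index $s$, then $\cb$ is an $S$-multiple of a product of $k$ squarefree $1$-covers of $\Delta$. To see this, delete the vertex $v_{s}$ from $\Delta$ to obtain a subcomplex $\Delta'$; it inherits the strict intersection property, and its intersection graph is the path obtained from the odd cycle $G_{\Delta}$ by removing the edge at $v_{s}$. Since this intersection graph is a tree, a closed walk in it must repeat an edge, which forces any special cycle of $\Delta'$ to have a repeated facet containing more than two cycle vertices---contradicting the definition of a special cycle. Hence $\Delta'$ has no special odd cycle, so by Theorem~\ref{no-odd} the algebra $A(\Delta')$ is standard graded. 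The vector $\cb$, not involving $v_{s}$, is a $k$-cover of $\Delta'$; decomposing it inside $A(\Delta')$ into squarefree $1$-covers of $\Delta'$ and lifting them to squarefree $1$-covers of $\Delta$ (their supports still meet every facet of $\Delta$) finishes this step.

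The statement now follows by induction on $k$. If some $c_{v_{i}}=0$, the tree reduction applies and we are done. Otherwise $\cb\geq\cb^{\ast}$ componentwise; for $k\geq 3$ the difference $\cb-\cb^{\ast}$ is a $(k-2)$-cover, and the factorization
\[
x^{\cb}t^{k}=\bigl(x^{\cb^{\ast}}t^{2}\bigr)\cdot\bigl(x^{\cb-\cb^{\ast}}t^{k-2}\bigr)
\]
together with the inductive hypothesis places $x^{\cb}t^{k}$ in the subalgebra. For $k=2$ one writes $x^{\cb}t^{2}=x^{\cb-\cb^{\ast}}\cdot x^{\cb^{\ast}}t^{2}$, and for $k=1$ the element $x^{\cb}t$ is already an $S$-multiple of $x^{C}t$ for any minimal vertex cover $C$ contained in $\supp(\cb)$.

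The most delicate step I anticipate is the tree reduction: one must carefully justify that after removing $v_{s}$ the complex $\Delta'$ contains no special odd cycle, so that Theorem~\ref{no-odd} is genuinely applicable; the argument rests on the intersection graph $G_{\Delta'}$ becoming a tree. Once this is in place, the inductive step is just the clean exchange that trades one copy of $\cb^{\ast}$ for a cover of order two less, and every cover produced along the way is squarefree by construction.
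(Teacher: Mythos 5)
Your proposal follows essentially the same route as the paper: the distinguished squarefree $2$-cover supported on the $m$ intersection vertices handles the case where all of $c_{v_1},\dots,c_{v_m}$ are positive, and when some $c_{v_s}=0$ you delete $v_s$, observe the resulting complex has no special odd cycle, and invoke Theorem~\ref{no-odd} exactly as the paper does (your justification that $G_{\Delta'}$ becomes a tree is in fact more explicit than the paper's). The one piece you omit is the word \emph{minimally}: to conclude that $A(\Delta)$ really has a generator in degree $2$ you must check that $\cb^{\ast}$ is indecomposable, which the paper does by noting that a $1$-cover $\ab\leq\cb^{\ast}$ has support that is a vertex cover of the odd cycle $C_m$ (each facet meets $\{v_1,\dots,v_m\}$ in exactly two consecutive vertices), hence has at least $(m+1)/2$ elements, so two disjoint such covers cannot fit inside the $m$ cycle vertices. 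Adding that one counting argument makes your proof complete.
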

\begin{proof}
Let $[n]$ be the vertex set of $\Delta$ and $\F(\Delta)=\{F_1,\ldots,F_m\}$. Since $G_\Delta$ is an odd cycle and $\Delta$ has the strict intersection property, we may assume $$i_1, F_1,i_2,F_2,\ldots,i_m,F_m, i_{m+1}=i_1$$   is the special odd cycle corresponding to $G_\Delta$ where $\{i_j\}=F_{j-1}\cap F_j$ for $j=2,\ldots,m$ and $\{i_1\}=F_{1}\cap F_m$. We consider a non-squarefree $k$-cover $\cb$ of $\Delta$  with $k>0$ and show that it has a decomposition $\ab+\bb$ such that $\ab$ is a  squarefree cover of positive order. This will imply that $B(\Delta)=A(\Delta)$. If all  entries $c_{i_1},\ldots,c_{i_m}$   of the $k$-cover $\cb$  are nonzero,  then we have the decomposition $\cb=\ab+\bb$ where $\ab$ is the squarefree 2-cover of $\Delta$ with $a_{i_1}=\ldots=a_{i_m}=1$ and all  other entries of $\ab$ are zero, and $\bb$ is the $(k-2)$-cover $\cb-\ab$. So  we may assume that at least one of the entries $c_{i_1},\ldots,c_{i_m}$, say  $c_{i_1}$, is zero. Let $\Gamma$ be the simplicial complex on the vertex set $[n]\setminus \{i_1\}$ with
$$\F(\Gamma)= \{F\setminus \{i_1\}\:F\in \F(\Delta)\}.$$
Consider the vector $\cb'=(c_1,\ldots, \hat{c}_{i_1},\ldots,c_m)$. The vector $\cb'$ is also a $k$-cover of $\Gamma$ because $c_{i_1}= 0$. Since the simplicial complex $\Gamma$ has no special cycle, by Theorem~\ref{no-odd}, the $S$-algebra $A(\Gamma)$ is standard graded. Therefore, there exists a decomposition $\ab'+\bb'$ of $\cb'$ such that $\ab'$ is a squarefree 1-cover and $\bb'$ is a $(k-1)$-cover of $\Gamma$. The vector $\ab$ with  $a_{i_1}=0$ and $a_j=a'_j$ for $j\neq i_1$, is a 1-cover of $\Delta$ and the vector $\bb$ with $b_{i_1}=0$ and $b_j=b'_j$ for $j\neq i_1$, is a $(k-1)$-cover of $\Delta$. Hence $\ab+\bb$ gives us the desired decomposition of $\cb$.

It follows from the above discussion that $A(\Delta)$ is  generated by $x_{i_1}x_{i_2}\cdots x_{i_m}t^2$ and the monomials $x^{\cb}t$ where $\cb$ is an indecomposable squarefree $1$-cover of $\Delta$. This is a minimal set of generators of $A(\Delta)$ because the 2-cover $(1,\ldots,1)$ is not decomposable. In fact since $m$ is an odd number, if $(1,\ldots,1)=\ab+\bb$, then one of $\ab$ and $\bb$ has at most $(m-1)/2$ nonzero entries which means that it is not a 1-cover.
\end{proof}

\begin{Theorem}
\label{str-intersec-prop}
Let $\Delta$ be a simplicial complex satisfying the strict intersection property and suppose that no two cycles of $G_\Delta$   have precisely two edges in common.  Then $B(\Delta)=A(\Delta)$ if and only if each connected component of $G_\Delta$ is a bipartite graph or an odd cycle.
\end{Theorem}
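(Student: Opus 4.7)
The plan is to prove the two directions separately, exploiting the bijection between cycles of $G_\Delta$ and special cycles of $\Delta$ that follows from the strict intersection property. Under $(\text{I}_1)$ and $(\text{I}_2)$, a cycle $v_{i_1}v_{i_2}\cdots v_{i_r}v_{i_1}$ in $G_\Delta$ is determined by the single-vertex intersections $\{p_j\}=F_{i_{j-1}}\cap F_{i_j}$, which are pairwise distinct and each lies in exactly two of the $F_{i_j}$, yielding a special cycle of $\Delta$; conversely every special cycle of $\Delta$ projects to a cycle of $G_\Delta$. In particular the connected components $C_1,\ldots,C_s$ of $G_\Delta$ correspond to pairwise disjoint unions of facets, supported on disjoint subsets $V_1,\ldots,V_s\subseteq[n]$, and every cover of $\Delta$ is the amalgam of its restrictions $\cb|_{V_l}$, each of which is a cover of $\Delta_{V_l}$ of the same order.

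For the ``if'' direction I would dispose of each component separately. If $C_l$ is bipartite then $\Delta_{V_l}$ has no special odd cycle, so by Theorem~\ref{no-odd} the algebra $A(\Delta_{V_l})$ is standard graded; if $C_l$ is an odd cycle then Lemma~\ref{odd-cycle} yields $A(\Delta_{V_l})=B(\Delta_{V_l})$ together with an explicit generating set consisting of the squarefree $1$-covers and a single distinguished degree-$2$ generator $\alpha_l$ supported on the intersection vertices of the cycle. I would then show that every indecomposable $k$-cover $\cb$ of $\Delta$ is squarefree by writing each $\cb|_{V_l}$ as a sum of these indecomposable squarefree generators of $A(\Delta_{V_l})$ and partitioning the resulting multiset across components into two subcollections whose order-sums are positive integers adding up to $k$; this produces an $(i,j)$-decomposition of $\cb$ with $i,j>0$ unless $\cb$ is already one of the squarefree generators of $A(\Delta)$. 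The hypothesis that no two cycles of $G_\Delta$ share precisely two edges ensures that each odd-cycle component has a unique $\alpha_l$ and that the order-splittings in distinct components can be matched consistently.

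For the ``only if'' direction I would argue by contrapositive, using Lemma~\ref{restriction}. Assume some component $C$ of $G_\Delta$ is neither bipartite nor an odd cycle; then $C$ contains an odd cycle $\gamma$ together with at least one additional edge or vertex, and the shared-edges hypothesis forces this extra piece to attach to $\gamma$ through a single facet rather than along two edges of $\gamma$. Choosing a subset $W\subseteq[n]$ that isolates $\gamma$ together with one such extra facet, I would exhibit inside $\Delta_W$ an explicit vector combining the $2$-cover on the intersection vertices of $\gamma$ with a coordinate of value $2$ on the additional facet; this vector will be a minimal, non-squarefree $2$-cover of $\Delta_W$, and it is indecomposable because any $(1,1)$-decomposition would have to split the restriction to $\gamma$ into two $1$-covers, which is impossible by the parity argument already used in Lemma~\ref{odd-cycle}. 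The resulting minimal generator of $A(\Delta_W)$ lies outside $B(\Delta_W)$, contradicting $A(\Delta)=B(\Delta)$ via Lemma~\ref{restriction}.

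The main obstacle I anticipate is the splitting step in the ``if'' direction: when several components are odd cycles, the generators of each $A(\Delta_{V_l})$ include not only squarefree $1$-covers but also the degree-$2$ element $\alpha_l$, so one must verify that the spectrum of achievable partial order-sums in each $\Delta_{V_l}$ shares a common value in $\{1,\ldots,k-1\}$. This parity/order bookkeeping is exactly where the hypothesis that no two cycles share precisely two edges becomes essential, since it rules out the configurations in which different odd-cycle components would remain out of phase and so obstruct the coordinated decomposition of $\cb$.
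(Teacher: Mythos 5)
Your overall skeleton matches the paper's: reduce to the connected components of $G_\Delta$, settle bipartite components with Theorem~\ref{no-odd} and odd-cycle components with Lemma~\ref{odd-cycle}, and prove the forward implication by contrapositive, restricting via Lemma~\ref{restriction} to an odd cycle plus one extra facet and exhibiting there an indecomposable non-squarefree $2$-cover. Two remarks on the ``if'' direction. The component bookkeeping you flag as the main obstacle is routine and does not need the shared-edges hypothesis: given an indecomposable $k$-cover $\cb$ of $\Delta$ with $k>0$, express each restriction $\cb|_{V_l}$ inside $B(\Delta_{V_l})=A(\Delta_{V_l})$, pick from each component one squarefree summand of positive order $m_l$, set $j=\min_l m_l$, and note that the sum of the chosen pieces is a squarefree $j$-cover of $\Delta$ whose complement in $\cb$ is a $(k-j)$-cover; indecomposability then forces $\cb$ to be squarefree. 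In the paper the hypothesis that no two cycles of $G_\Delta$ have exactly two edges in common is used only in the other direction, so attributing its role to this matching step is a misreading of where the difficulty lies.

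The genuine gap is in the ``only if'' direction. Your construction places a coordinate of value $2$ ``on the additional facet,'' which presupposes that the extra facet $F_{r+1}$ has a vertex lying in none of the facets of the odd cycle. That need not happen: under the strict intersection property $F_{r+1}$ may consist entirely of vertices each shared with exactly one cycle facet, and then there is no free coordinate to inflate, while shrinking $W$ to delete such a vertex also deletes $F_{r+1}$ from $\Delta_W$. The paper accordingly splits into two cases. When a free vertex $t$ exists it argues essentially as you do (and there one must also rule out a splitting into a squarefree $2$-cover plus a $0$-cover, not only a $(1,1)$-splitting; this is where the observation $\sum_{j\in F}a_j=a_t\leq 1$ for a facet $F$ containing $t$ is needed). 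When no free vertex exists, it builds a different non-squarefree $2$-cover supported on the cycle itself: $1$'s on the two vertices $j_1,j_s$ where $F_{r+1}$ meets the cycle facets $F_1$ and $F_s$ and on $i_2,\dots,i_s$, and $2$'s on the arc $i_{s+2},\dots,i_r$. This vector is non-squarefree precisely when $r-s>1$, and that inequality is exactly what the shared-edges hypothesis delivers: if $r-s=1$, the cycles $v_1,v_{r+1},v_s,v_r$ and $v_1,\dots,v_r$ of $G_\Delta$ would have exactly two edges in common. So the hypothesis you relocated to the ``if'' direction is in fact the pivot of the case your argument omits, and without that case the contrapositive does not go through.
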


\begin{proof}
Let $G_1,\dots,G_t$ be the connected components of $G_\Delta$, $\Delta_1,\ldots,\Delta_t$ be the corresponding connected components of $\Delta$ and $\{v_{j1},\ldots,v_{js_j}\}$ be the vertex set of $G_j$. Then a $k$-cover $\cb$ of $\Delta$ can be decomposed into an $i$-cover $\ab$ and a $j$-cover $\bb$ of $\Delta$ if and only if the $k$-cover $\cb_j=(c_{j1},\ldots,c_{js_j})$ of $\Delta_j$ can be decomposed to $i$-cover $(a_{j1},\ldots,a_{js_j})$ and $j$-cover $(b_{j1},\ldots,b_{js_j})$ of $\Delta_j$ for all $j$. Hence $B(\Delta)=A(\Delta)$ if and only if $B(\Delta_j)=A(\Delta_j)$ for all $j$. Therefore, it is  enough to consider the case that $G_\Delta$ is connected.

 First, let $B(\Delta)=A(\Delta)$.  We assume that  $G_\Delta$ is not bipartite and show that it is an odd cycle. Let $[n]$ be the set of vertices of $\Delta$ and $\F(\Delta)=\{F_1,\ldots,F_m\}$. Since  $G_\Delta$ is not bipartite, it has an odd cycle $C$. We may assume that the cycle $C\: v_{i_1},\ldots,v_{i_t}$ has no chord because if  $\{v_{i_r},v_{i_s}\}$  is a chord of $C$, then either $v_{i_1},\ldots,v_{i_r},v_{i_s},\ldots,v_{i_t}$ or $v_{i_r},v_{i_{r+1}},\ldots,v_{i_s}$ is an odd cycle. Suppose  the special cycle  corresponding to $C$ in $\Delta$, after a relabeling of facets, is  the cycle
$$\C:\; i_1,F_1,i_2,\ldots,i_r,F_r,i_{r+1}=i_1,$$  where $\{i_j\}=F_{j-1}\cap F_j$ and $r$ is an odd integer. We show that $m=r$. This will imply that  $G_\Delta =C$ because  $C$ has no chord.

Assume $m>r$. We consider two cases and in each case we will find a non-squarefree indecomposable cover of $\Delta$, contradicting our assumption that $B(\Delta)=A(\Delta)$.

{\em Case1.} Suppose each  vertex of $\Delta$ belongs to  at least one of the facets of the cycle $\C$. It is enough to consider the case that $m=r+1$. Indeed, if $m>r+1$, we set $W=[n]\setminus \Union_{j=r+2}^mF_j$ and consider $\Delta_W$.  The strict intersection property implies that $\Delta_W$  has the special odd cycle
 $$\C:\; i_1,F_1',i_2,\ldots,i_r,F_r',i_{r+1}=i_1,$$
 where $F_i' =F_i\setminus \Union_{j=r+2}^mF_j$ for all $i$. Moreover, $\F(\Delta_W)=\{F_1',\ldots, F_r',F_{r+1}\}$.  Lemma~\ref{restriction} implies that $B(\Delta)\neq A(\Delta)$ if $B(\Delta_W)\neq A(\Delta_W)$.

There exist two facets   of $\C$,  say $F_1$ and $F_s$, which intersect $F_{r+1}$.  Since $r$ is an odd integer, one of the cycles
$$ j_1,F_1,i_2,F_2,\ldots,i_s,F_s,j_s,F_{r+1},j_1$$ or $$j_1, F_{r+1},j_s,F_s,i_{s+1},\ldots,F_r, i_1,F_1,j_1$$ is of odd length where $\{j_1\}=F_{r+1}\cap F_1$ and
$\{j_s\}=F_{r+1}\cap F_s$. Without loss of generality, we assume $ j_1,F_1,i_2,F_2,\ldots,i_s,F_s,j_s,F_{r+1},j_1$ is an odd cycle and call it $\D$. One has $r-s>1$ because  $r-s$ is an odd number and if we had    $r-s=1$,  then the  cycles $v_1,v_{r+1},v_s,v_r$ and $v_1,\ldots,v_r$  in $G_\Delta$ would have exactly two edges in common, contradicting our assumption.

 Now we consider the vector $\cb$ with  $c_{j_1}=c_{j_s}=c_{i_2}=\ldots=c_{i_s}=1$, $c_{i_{s+2}}=\ldots=c_{i_r}=2$ and all the  other entries of $\cb$ are zero. The vector $\cb$ is a non-squarefree 2-cover of $\Delta$ while $x^{\cb}t^2\not\in B(\Delta)$. In fact, if $x^{\cb}t^2\in B(\Delta)$, then there is a decomposition $\cb=\ab+\bb$ by an    indecomposable squarefree cover $\ab$   and a cover $\bb$ such that either $\ab$ is a 2-cover or $\ab$ and $\bb$ are both 1-covers of $\Delta$.  Since $\ab$ is squarefree, one has $\sum_{j\in F_{s+1}}a_j \leq 1$. Hence the order of $\ab$ cannot be 2.  On the other hand, since $\sum_{j \in F_i }a_j \geq 1$ for all facets $F_i$ of $\D$, at least $(s/2)+1$  of the  entries  $a_{j_1},a_{j_s},a_{i_2},\ldots,a_{i_s}$ of $\ab$  are nonzero. Therefore at most $s/2$  of the  entries  $b_{j_1},b_{j_s},b_{i_2},\ldots,b_{i_s}$  of $\bb$ are nonzero. Therefore $\bb$ is not a 1-cover of $\Delta$, and so $x^{\cb}t^2\not\in B(\Delta)$.

 {\em Case 2}. Suppose there exists a  vertex $t$ of $\Delta$ which belongs to none  of the facets of the cycle $\C$.
 We may assume that $t$ is the only vertex of $\Delta$ with this property. Because if $t_1,\ldots,t_s$ are the other vertices of $\Delta$ with the same property, then we  consider the  restriction of $\Delta$ to the set $W= [n]\backslash \{t_1,\ldots,t_s\}$,  and show that  $B(\Delta_W) \neq A(\Delta_W)$. Then by applying Lemma~\ref{restriction} we obtain  $B(\Delta) \neq A(\Delta)$.

We may assume that every facet of $\Delta$ which is not a facet of $\C$  contains $t$. Otherwise we  restrict  $\Delta$ to $W'=[n]\setminus \{t\}$, and by Case 1 it follows that $B(\Delta_{W'})\neq  A(\Delta_{W'})$ which implies that  $B(\Delta)\neq  A(\Delta)$.  Let $\cb$  be the vector with  $c_t=2$, $c_{i_1}=c_{i_2}=\ldots=c_{i_r}=1$ and all other entries of $\cb$ be  zero. The vector $\cb$ is a 2-cover of $\Delta$. Suppose $\cb=\ab+\bb$ for some squarefree indecomposable cover $\ab$ of positive order and some cover $\bb$ of $\Delta$. Then    at least $(r+1)/2$  of the entries $a_{i_1},a_{i_2},\ldots,a_{i_r}$  of $\ab$ are nonzero,  and consequently at most $(r-1)/2$ of  the entries $b_{i_1},b_{i_2},\ldots,b_{i_r}$ of $\bb$ are nonzero. Hence $\bb$ is not a $1$-cover. Furthermore,  if $F$ is a facet of $\Delta$ containing $t$, then $\sum_{j\in F}a_j=a_t=1$. Hence $\ab$ is not 2-cover of $\Delta$. Therefore  $\cb$ is an indecomposable non-squarefree 2-cover of $\Delta$.

 Conversely, we suppose  $G_\Delta$ is a bipartite graph or an odd cycle. If $G_\Delta$ is bipartite,  then $\Delta$ has no special odd cycle. Therefore by Theorem~\ref{no-odd}, $A(\Delta)$ is standard graded and consequently $B(\Delta)=A(\Delta)$. The equality for the  case that $G_\Delta$ is an odd cycle, has been shown in Lemma~\ref{odd-cycle}.
\end{proof}

Consider the simplicial complexes $\Delta_1$ and $\Delta_2$ as shown in Figure~\ref{2common}. They both satisfy the  strict intersection property and have the  same intersection graph. The cycles $v_1,v_2,v_3$ and $v_1,v_2,v_3,v_4$ of this intersection graph have exactly two edges in common. We have $B(\Delta_1)=A(\Delta_1)$ but $B(\Delta_2)\neq A(\Delta_2)$.

In fact,  vector $(1,0,2,0,1,1)$ is an indecomposable 2-cover of $\Delta_2$ which shows  $B(\Delta_2)\neq A(\Delta_2)$. However, $A(\Delta_1)$ is even standard graded. Let $\cb=(c_1,\ldots,c_5)$ be a $k$-cover of $\Delta_1$ with $k\geq 2$. We show that there is a decomposition of $\cb$ to a 1-cover $\ab$ and  $(k-1)$-cover $\bb=\cb -\ab$ of  $\Delta_1$. If $c_1$ and $c_3$ are both nonzero, then $\ab=(1,0,1,0,0)$ gives the desired decomposition because the set $\{1,3\}$ meets each facet of $\Delta_1$ at exactly one vertex. Therefore, we may assume $c_1=0$ or $c_3=0$. By the same argument, one may assume  $c_2=0$ or $c_4=0$. So it is enough to consider the case that  $c_1$ and $c_2$ are zero or the case that $c_1$ and $c_4$ are  zero. However, $\cb$ is a $k$-cover of positive order so the entries $c_1$ and $c_4$ cannot be both zero. Now since the vector $\cb=(0,0,c_3,c_4,c_5)$ is a $k$-cover of $\Delta_1$,  one obtains that  $c_i\geq k$ for $i=3,4,5$.  This implies that the vector $\ab=(0,0,1,1,1)$ and $\bb=\cb-\ab$ give the desired decomposition of $\cb$.

\begin{figure}
\begin{center}
\begin{pspicture}(-0.3,-6.5)(14.3,4.3)
\psdots(0,0)(3,0)(1.5,2)(0,4)(3,4)
\pspolygon[fillcolor=medium,fillstyle=solid](0,0)(3,0)(1.5,2)
\pspolygon[fillcolor=medium,fillstyle=solid](1.5,2)(0,4)(3,4)
\psline(0,0)(0,4)
\psline(3,0)(3,4)
 \rput(1.6,-1.2){${\bf \Delta_1}$}
 \rput(1.5,1){$F_3$}
 \rput(-0.4,2){$F_4$}
 \rput(1.5,3){$F_1$}
 \rput(3.4,2){$F_2$}
 \rput(-0.3,4.3){1}
 \rput(3.3,4.3){2}
 \rput(3.3,-0.3){3}
 \rput(-0.3,-0.3){4}
 \rput(1.1,2){5}
 %%%%%%%%%%%%%%%%
 \psdots(9,0)(12,0)(10.5,2)(9,4)(12,4)(14,2)
\pspolygon[fillcolor=medium,fillstyle=solid](9,0)(12,0)(10.5,2)
\pspolygon[fillcolor=medium,fillstyle=solid](10.5,2)(9,4)(12,4)
\pspolygon[fillcolor=medium,fillstyle=solid](12,4)(12,0)(14,2)
\psline(9,0)(9,4)
 \rput(10.6,-1.2){${\bf \Delta_2}$}
 \rput(10.5,1){$F_3$}
 \rput(8.6,2){$F_4$}
 \rput(10.5,3){$F_1$}
 \rput(13,2){$F_2$}
 \rput(8.7,4.3){1}
 \rput(12.3,4.3){2}
 \rput(14.3,2){3}
 \rput(11.7,-0.3){4}
 \rput(8.7,-0.3){5}
 \rput(10.1,2){6}
%
%Intersection Graph
%
\psdots(4.5,-3.5)(7,-2)(9.5,-3.5)(7,-5)
\psline(4.5,-3.5)(7,-2)(9.5,-3.5)(7,-5)(4.5,-3.5)
\psline(7,-2)(7,-5)
 \rput(7,-1.6){$v_1$}
 \rput(9.9,-3.5){$v_2$}
 \rput(7,-5.4){$v_3$}
 \rput(4.1,-3.5){$v_4$}
 \rput(7,-6){The intersection graph of $\Delta_1$ and $\Delta_2$}
  \end{pspicture}
\end{center}
\caption{}\label{2common}
\end{figure}

\medskip

\section{Vertex covers of principal Borel sets}
The classes of simplicial complexes considered so far for which $B(\Delta)=A(\Delta)$, happened to have the property that  $A(\Delta)$  is generated over $S$  in degree at most 2. In this section we present classes of simplicial complexes $\Delta$ such that $B(\Delta)=A(\Delta)$ and  $A(\Delta)$ has generators in higher degrees.

 We will consider a family of simplicial complexes whose set of facets corresponds to a Borel set. Recall that a subset $\B\in 2^{[n]}$ is called {\em Borel} if
whenever $F\in B$ and $i < j$ for some $i\in [n]\setminus F$ and $j\in F$, then $(F \setminus \{j\}) \cup \{i\}\in \B$. Elements $F_1,\ldots,F_m\in \B$  are called {\em Borel generators} of $\B$, denoted by $\B=B( F_1,\ldots,F_m)$, if $\B$ is the smallest Borel subset of $2^{[n]}$ such that $F_1,\ldots,F_m\in \B$.  A Borel set B is called
{\em principal} if there exists $F\in \B$ such that $\B = B(F)$.

A squarefree monomial ideal $I\subseteq S$ is called a {\em squarefree Borel ideal} if there exists a Borel set $\B\subseteq 2^{[n]}$ such that
\[
I=(\{x_F \:\; F\in \B \}).
\]
If $\B=B( F_1,\ldots,F_m)$, then the monomials $x_{F_1},\ldots,x_{F_m}$ are called the {\em Borel generators} of $I$.
The ideal $I$ is called  a {\em squarefree principal Borel ideal}  if $\B$ is principal Borel.

It is known that the Alexander dual of a squarefree Borel ideal is again squarefree Borel \cite{FMS}. In the case that $I$ is  squarefree principal Borel, the following result is shown in \cite[Theorem 3.18]{FMS}.

\begin{Theorem}[Francisco, Mermin, Schweig]
\label{BorelAlexander}
Let $I$ be a squarefree principal Borel ideal with the Borel generator $x_F$ where $F=\{i_1<i_2<\cdots < i_d\}$. Then the Alexander dual $I^\vee$ of $I$ is the squarefree Borel ideal with the Borel generators $x_{H_1},\ldots,x_{H_d}$ where $H_q=\{q,q+1,\ldots,i_q\}$ for  $q=1,\ldots,d$.
\end{Theorem}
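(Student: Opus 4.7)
The plan is to translate the statement into a vertex-cover problem and then solve it by a short greedy argument. Writing $I=I(\Delta)$ with $\F(\Delta)=\B=B(F)$, the discussion in Section~1 identifies $x_G\in I^\vee$ with the condition that $G$ meets every facet of $\Delta$, i.e.\ that $G$ is a vertex cover of $\Delta$. The goal then splits into (a) showing $x_{H_q}\in I^\vee$ for every $q$, and (b) showing that every squarefree $x_G\in I^\vee$ is divisible by some $x_{G'}$ with $G'$ in the Borel closure of one of the $H_q$.

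For (a), fix $q$ and an arbitrary facet $F'=\{j_1<j_2<\cdots<j_d\}\in\B$. Since $F'$ arises from $F$ by Borel moves, $j_s\leq i_s$ for every $s$; in particular $j_q\leq i_q$. Moreover $j_q\geq q$, because $j_1<\cdots<j_{q-1}<j_q$ are $q$ distinct positive integers. Hence $j_q\in\{q,q+1,\dots,i_q\}=H_q$, so $H_q\cap F'\neq\emptyset$, as required.

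The combinatorial heart of (b) is the following characterization, which I would establish by a short greedy argument: a squarefree subset $G\subseteq[n]$ is a vertex cover of $\Delta$ if and only if $|G\cap\{1,\dots,i_q\}|\geq i_q-q+1$ for some $q\in\{1,\dots,d\}$. For the non-trivial direction, argue by contrapositive: if $|G\cap\{1,\dots,i_s\}|\leq i_s-s$ for every $s$, then the sorted enumeration $k_1<k_2<\cdots$ of $[n]\setminus G$ satisfies $k_s\leq i_s$ for all $s\leq d$, so $F':=\{k_1,\dots,k_d\}$ lies in $B(F)=\B$ and is disjoint from $G$, contradicting the vertex-cover property.

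From this characterization the theorem follows: given a squarefree $x_G\in I^\vee$, choose $q$ with $|G\cap\{1,\dots,i_q\}|\geq|H_q|=i_q-q+1$ and pick any $G'\subseteq G\cap\{1,\dots,i_q\}$ of this size. Writing $G'=\{g_1<\cdots<g_{|H_q|}\}$, the inclusion $G'\subseteq\{1,\dots,i_q\}$ forces $g_t\leq i_q-(|H_q|-t)=q+t-1$, which is precisely the Borel condition saying that $G'$ dominates $H_q$. Hence $x_{G'}$ lies in the Borel ideal generated by $x_{H_q}$ and divides $x_G$. Combined with (a), this proves that $I^\vee$ is the squarefree Borel ideal with Borel generators $x_{H_1},\dots,x_{H_d}$. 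The main obstacle is the greedy lemma above; once it is in place, the index bookkeeping between the shift $q\mapsto q+t-1$ and the initial interval $\{1,\dots,i_q\}$ is routine counting.
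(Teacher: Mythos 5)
The paper itself offers no proof of this theorem---it is quoted verbatim from \cite[Theorem 3.18]{FMS}---so there is no internal argument to compare with, and I am judging your proof on its own merits. It is correct and self-contained. The translation of $x_G\in I^\vee$ into ``$G$ is a vertex cover of the complex with facet set $B(F)$'' is exactly the dictionary the paper sets up in Section~1; part (a) is right (for any facet $\{j_1<\cdots<j_d\}\prec F$ one has $q\leq j_q\leq i_q$, so $j_q\in H_q$); and your greedy lemma carries the real content: if $|G\cap[i_s]|\leq i_s-s$ for all $s$, then $[i_s]\setminus G$ has at least $s$ elements for each $s$, so the $s$-th smallest element $k_s$ of $[n]\setminus G$ satisfies $k_s\leq i_s$, producing a facet $\{k_1<\cdots<k_d\}\prec F$ disjoint from $G$; and the count $g_t\leq i_q-(|H_q|-t)=q+t-1$ correctly shows the chosen $G'$ precedes $H_q$. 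The one step you leave implicit is the reverse ideal containment: the asserted equality requires that \emph{every} $H$ in the Borel closure $B(H_1,\ldots,H_d)$---not only the generators $H_q$ handled in (a)---be a vertex cover, since all such $x_H$ lie in the ideal you claim equals $I^\vee$. This is exactly the unproved (``easy'') direction of your iff-lemma: $H\prec H_q$ forces $H\subseteq[i_q]$ with $|H|=i_q-q+1$, while every facet $F'$ has at least $q$ elements in $[i_q]$, so $|H\cap[i_q]|+|F'\cap[i_q]|>i_q$ and $H\cap F'\neq\emptyset$ by pigeonhole. Add that sentence and the argument is complete.
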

\medskip
Let $G_1=\{i_1<\cdots<i_d\}$ and $G_2=\{j_1<\cdots<j_d\}$ be subsets of $[n]$. It is said that {\em $G_1$ precedes $G_2$ (with respect to the Borel order)}, denoted by $G_1\prec G_2$, if $i_s\leq j_s$ for all $s$. By \cite[Lemma 2.11]{FMS}, $F\in B( F_1,\ldots,F_m) $ if and only if $F$ precedes $F_i$, for some $i=1,\ldots,m.$

\begin{Lemma}
\label{skeleton}
Let $\B=B( F_1,\ldots,F_m)$ be a  Borel set with Borel generators  $F_j=\{i_{j,1}<i_{j,2}<\cdots < i_{j,d_j}\}$ for $j=1,\ldots,m$, and suppose $\Delta$ is the simplicial complex with  $\F(\Delta)=\B$.
Then  for the $q$-skeleton $\Delta^{(q)}$ of $\Delta$, the set $\F(\Delta^{(q)})$ is  a  Borel set with the Borel generators $G_1,\ldots,G_m$ such that $G_j=\{i_{j,d_j-q}<i_{j,d_j-q+1}<\cdots<i_{j,d_j}\}$ if $d_j> q$, and $G_j=F_j$ if $d_j\leq q$.
\end{Lemma}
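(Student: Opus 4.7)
Let $\mathcal{B}' := \mathcal{F}(\Delta^{(q)})$ and $\mathcal{B}'' := B(G_1,\ldots,G_m)$; the goal is to prove $\mathcal{B}' = \mathcal{B}''$. My approach is to first pin down exactly which subsets of $[n]$ belong to $\mathcal{B}'$, then verify both inclusions using the criterion of \cite[Lemma 2.11]{FMS}: $F$ lies in a Borel closure $B(H_1,\ldots,H_m)$ iff $|F|=|H_j|$ and $F \prec H_j$ for some $j$.

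The first step is to observe that a set $F \subseteq [n]$ is a facet of $\Delta^{(q)}$ if and only if either (i) $|F|=q+1$ and $F \subseteq H$ for some $H \in \mathcal{B}$, or (ii) $F \in \mathcal{B}$ with $|F|\leq q$. The only nontrivial point here is that a set $F$ with $|F|\leq q$ cannot be a facet of $\Delta^{(q)}$ unless it is already a facet of $\Delta$: if $F \subsetneq G \in \Delta$, then adjoining any $x \in G\setminus F$ to $F$ produces a strictly larger face in $\Delta^{(q)}$. Using this description together with the Borel property of $\mathcal{B}$ one checks directly that $\mathcal{B}'$ is itself Borel, so by Lemma~2.11 of \cite{FMS} it suffices to prove containment in both directions on the level of Borel generators. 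For the inclusion $\mathcal{B}'' \subseteq \mathcal{B}'$, I would simply note that $G_j$ falls in case (i) when $d_j > q$ (it is a $(q+1)$-subset of $F_j \in \mathcal{B}$) and in case (ii) when $d_j \leq q$ (it equals $F_j$).

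The reverse inclusion $\mathcal{B}' \subseteq \mathcal{B}''$ is the real content. If $F$ is of type (ii), then $F \in \mathcal{B}$ forces $F \prec F_j$ for some $j$ with $d_j = |F| \leq q$, and for such $j$ we have $G_j = F_j$, so $F \in \mathcal{B}''$. If $F$ is of type (i), write $F \subseteq H$ with $H = \{h_1<\cdots<h_{d_j}\} \prec F_j$ and $d_j > q$; let $1 \leq s_1 < s_2 < \cdots < s_{q+1} \leq d_j$ be the positions in $H$ occupied by the elements of $F$. To show $F \prec G_j$ I must verify that the $\ell$-th entry $h_{s_\ell}$ of $F$ is bounded by the $\ell$-th entry $i_{j,\,d_j-q+\ell-1}$ of $G_j$. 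Since $H \prec F_j$ gives $h_{s_\ell} \leq i_{j,s_\ell}$, the monotonicity of the $i_{j,\cdot}$ reduces this to the purely numerical inequality
\[
 s_\ell \leq d_j - q + \ell - 1, \qquad \ell = 1,\ldots,q+1.
\]

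I expect this last inequality to be the only real technical step, and the bound is sharp. It will follow from the simple counting observation that the $q+1-\ell$ indices $s_{\ell+1},\ldots,s_{q+1}$ all lie strictly between $s_\ell$ and $d_j$, forcing $s_\ell + (q+1-\ell) \leq d_j$. Once this inequality is established, $F \prec G_j$ and hence $F \in \mathcal{B}''$, completing the proof.
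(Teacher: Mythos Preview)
Your proposal is correct and follows essentially the same line as the paper's proof. Both arguments split into the cases $|F|\leq q$ and $|F|=q+1$, identify the facets of $\Delta^{(q)}$ in each case, and then match them with the Borel closure of the $G_j$ via \cite[Lemma 2.11]{FMS}. The only organizational difference is that the paper, for the inclusion $\B''\subseteq\B'$ in the $(q+1)$-case, explicitly builds a facet $H=[r]\cup G\prec F$ containing a given $G\prec G_j$, whereas you first check that $\B'$ is Borel and then simply observe $G_j\in\B'$; conversely, for $\B'\subseteq\B''$ the paper asserts without detail that $G\subseteq H\prec F$ implies $G\prec\{i_{d-q},\ldots,i_d\}$, while you spell this out via the counting inequality $s_\ell+(q+1-\ell)\leq d_j$.
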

\begin{proof}
Let $\B\subseteq 2^{[n]}$, and  $G$ be a subset of $[n]$. First assume that $|G|\leq q$. In this case $G$ is a facet of $\Delta^{(q)}$ if and only if $G$ is a facet of $\Delta$, and this is the case if and only if $G$ precedes  $F_j$ for some $j$ with $d_j\leq q$.

Next assume that $|G|=q+1$.  We must show that $G$ is a facet of $\Delta^{(q)}$  if and only if $G$ precedes $\{i_{j,d_j-q},i_{j,d_j-q+1},\ldots,i_{j,d_j}\}$ for some $j$ with $d_j> q$.  The set $G$ is a facet of $\Delta^{(q)}$ if and only if there exists a facet $H$ of $\Delta$, preceding  $F_j$ for some $j$ with $d_j> q$, and $G\subseteq H$.  So for simplicity, we may assume that $\B$ is a principal Borel set with the Borel generator $F=\{i_1<i_2<\cdots < i_d\}$ where $d>q$, and show that $G$ is a facet of $\Delta^{(q)}$ if and only if $G$ precedes $\{i_{d-q},\ldots,i_d\}$. Firstly, let $G= \{k_{d-q}<k_{d-q+1}<\cdots<k_d\}\subseteq [n]$, and suppose $G$ precedes $\{i_{d-q},\ldots,i_d\}$. So we have $k_j\leq i_j$, for $j=d-q,\ldots,d$. Let $r$ be an integer such that the cardinality of $[r]\cup \{k_{d-q},k_{d-q+1},\cdots,k_d\}$ is $d$. The set $H=[r]\cup \{k_{d-q},k_{d-q+1},\cdots,k_d\}$ precedes $F$ and obviously includes $G$. This means that $G$ is a facet of $\Delta^{(q)}$. On the other hand, if there exists a set $H$ such that $G\subseteq H$ and $H$ precedes $F$, then $G$ precedes  $\{i_{d-q},\ldots,i_d\}$.
\end{proof}

By the preceding lemma, we have the following generalization of Theorem \ref{BorelAlexander}(\cite[Theorem 3.18]{FMS}).

\begin{Proposition}
\label{B-generators}
Let $\B=B( F)$ be a principal Borel set with Borel generator  $F=\{i_1<i_2<\cdots < i_d\}$, and let $\Delta$ be the simplicial complex with  $\F(\Delta)=\B$. Then the $S$-algebra $B(\Delta)$ is generated by the elements $x_Ht^k$, for k=1,\ldots,d, where
\[
 H\in B(\{q,q+1,\ldots, i_{k+q-1}\}\:\;    q=1,\ldots,d-k+1).
\]
\end{Proposition}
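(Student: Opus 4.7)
The plan is to reduce everything to already-proved results by using Theorem~\ref{dual} to convert squarefree $k$-cover ideals into Alexander duals of skeleton facet ideals, and then to apply Lemma~\ref{skeleton} and Theorem~\ref{BorelAlexander} to read off the explicit Borel generators.

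First I would observe that since Borel moves preserve cardinality, every element of $\B = B(F)$ has exactly $d$ elements, so $\Delta$ is pure of dimension $d-1$ and the minimum facet size is $d$. By Lemma~\ref{lunch} the $S$-algebra $B(\Delta)$ is therefore generated in degrees $k \leq d$, so it suffices to range over $k = 1,\ldots,d$. By the very definition of $B(\Delta)$, it is generated over $S$ by the monomials $x^{\cb}t^k$ with $\cb$ a squarefree $k$-cover, that is, by the monomials $x_H t^k$ with $x_H \in L_k(\Delta)^{sq}$. Thus the problem reduces to identifying the ideal $L_k(\Delta)^{sq}$ explicitly for each $k$.

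Next, since $\Delta$ is pure, Theorem~\ref{dual} gives
\[
L_k(\Delta)^{sq} = I(\Delta^{(d-k)})^\vee.
\]
To make the right-hand side explicit, I apply Lemma~\ref{skeleton} with the principal Borel generator $F = \{i_1 < \cdots < i_d\}$ and with $q = d-k$; this shows that $\F(\Delta^{(d-k)})$ is again principal Borel with Borel generator $\{i_k < i_{k+1} < \cdots < i_d\}$, an $(d-k+1)$-element set. Consequently $I(\Delta^{(d-k)})$ is a squarefree principal Borel ideal, so Theorem~\ref{BorelAlexander} applies.

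Finally, relabelling the generator of $\F(\Delta^{(d-k)})$ as $\{j_1 < \cdots < j_{d-k+1}\}$ with $j_q = i_{k+q-1}$ and feeding this into Theorem~\ref{BorelAlexander} yields that $I(\Delta^{(d-k)})^\vee$ is the squarefree Borel ideal with Borel generators
\[
x_{H_q}, \qquad H_q = \{q, q+1, \ldots, j_q\} = \{q, q+1, \ldots, i_{k+q-1}\}, \qquad q=1,\ldots,d-k+1.
\]
This identifies $L_k(\Delta)^{sq}$ as the ideal generated by $x_H$ for $H$ in the Borel set $B(\{q,q+1,\ldots,i_{k+q-1}\}:q=1,\ldots,d-k+1)$, which is precisely what the proposition claims. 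The only real obstacle is bookkeeping: keeping the indexing straight when composing Lemma~\ref{skeleton} (which picks out the top $d-k+1$ entries of $F$) with Theorem~\ref{BorelAlexander} (which builds the Alexander-dual Borel generators as initial segments). All the substantive content, namely the duality between squarefree $k$-covers and vertex covers of the $(d-k)$-skeleton and the fact that principal Borel structure is preserved under taking skeletons, has already been established.
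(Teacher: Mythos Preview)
Your proof is correct and follows essentially the same route as the paper: both use Lemma~\ref{lunch} to bound the generating degree by $d$, invoke purity and Theorem~\ref{dual} to rewrite $L_k(\Delta)^{sq}$ as $I(\Delta^{(d-k)})^\vee$, apply Lemma~\ref{skeleton} to identify the skeleton's facet ideal as principal Borel with generator $\{i_k,\ldots,i_d\}$, and finish with Theorem~\ref{BorelAlexander}. Your version is slightly more explicit in tracking the index shift $j_q = i_{k+q-1}$, but the argument is the same.
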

\begin{proof}
First observe that by Lemma \ref{lunch}, $B(\Delta)$ is generated in degree $\leq d$. So it is enough to show that for each  $k=1,\ldots,d$, the monomials $x_Ht^k$ with
$H\in B(\{q,q+1,\ldots ,i_{k+q-1}\}\:\;   q=1,\ldots,d-k+1)$  generate $L_k(\Delta)^{sq}$. Since $\Delta$ is pure, by Theorem \ref{dual}, this is the case if these monomials generate $I(\Delta^{(d-k)})^\vee$. By Lemma~\ref{skeleton}, the ideal $I(\Delta^{(d-k)})$ is a Borel ideal with the Borel generator $x_{i_k}x_{i_{k+1}}\cdots x_{i_d}$. Hence Theorem~\ref{BorelAlexander} implies the result.
\end{proof}

\begin{Proposition}
\label{borel}
Let $\B=B(F_1,\ldots,F_m)$ be a Borel set such that  $|F_i|=|F_j|$ for all $i,j$, and  suppose $\Delta $ is a simplicial complex with $\F(\Delta)=\B$. Then $L_k(\Delta)^{sq}$ is a squarefree Borel ideal for all $k$.
\end{Proposition}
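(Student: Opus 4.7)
The plan is to deduce this result by chaining together the duality theorem \ref{dual}, the skeleton lemma \ref{skeleton}, and the Francisco–Mermin–Schweig fact that the Alexander dual of a squarefree Borel ideal is again squarefree Borel. The equal-cardinality hypothesis $|F_i|=|F_j|$ is precisely what is needed to make $\Delta$ pure, and it is purity that gives us access to Theorem~\ref{dual}.

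More concretely, write $d=|F_1|=\cdots=|F_m|$, so that $\Delta$ is pure of dimension $d-1$. I would handle the trivial ranges first: for $k=0$ we have $L_0(\Delta)^{sq}=S$, and for $k>d$ no squarefree $k$-cover exists (since $\sum_{i\in F_j}c_i\leq|F_j|=d<k$ when $c_i\in\{0,1\}$), so $L_k(\Delta)^{sq}=(0)$; in both cases the ideal is trivially a squarefree Borel ideal.

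For the main range $1\leq k\leq d$, Theorem~\ref{dual} applies because $\Delta$ is pure, giving
\[
L_k(\Delta)^{sq}=I(\Delta^{(d-k)})^\vee.
\]
Then I would invoke Lemma~\ref{skeleton}: since $\mathcal{F}(\Delta)=B(F_1,\ldots,F_m)$ and every facet has size $d>d-k$ when $k\geq 1$, the set $\mathcal{F}(\Delta^{(d-k)})$ is itself a Borel set, with Borel generators given explicitly by the last $d-k+1$ elements of each $F_j$. Consequently $I(\Delta^{(d-k)})=(x_G:G\in\mathcal{F}(\Delta^{(d-k)}))$ is a squarefree Borel ideal.

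Finally, applying the result \cite{FMS} that Alexander duality preserves the squarefree Borel property, we conclude that $L_k(\Delta)^{sq}=I(\Delta^{(d-k)})^\vee$ is a squarefree Borel ideal, as required. There is no real obstacle here: the content of the proposition is essentially the observation that all three ingredients are already in place. The only subtlety worth checking is that Theorem~\ref{dual} really does require purity (which the hypothesis supplies), and that the boundary values $k=0$ and $k>d$ are handled separately so that the statement \textquotedblleft for all $k$\textquotedblright{} is covered.
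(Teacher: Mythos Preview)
Your proposal is correct and follows essentially the same route as the paper: use purity to invoke Theorem~\ref{dual}, apply Lemma~\ref{skeleton} to see that $I(\Delta^{(d-k)})$ is squarefree Borel, and then appeal to the fact from \cite{FMS} that Alexander duality preserves the squarefree Borel property. The only cosmetic difference is that the paper spells out this last step by reducing to the principal case (Theorem~\ref{BorelAlexander}) via $(I+J)^\vee=I^\vee\cap J^\vee$ and the closure of squarefree Borel ideals under intersection, whereas you cite the general fact directly; your extra handling of the boundary cases $k=0$ and $k>d$ is a harmless addition.
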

\begin{proof}
Since $|F_i|=|F_j|$ for all $i,j$, the simplicial complex $\Delta$ is pure. Considering Theorem~\ref{dual}, this implies that $L_k(\Delta)^{sq}=I(\Delta^{(d-k)})^\vee$ for all $k$. By Lemma~\ref{skeleton}, the ideal $I(\Delta^{(d-k)})$ is a squarefree Borel ideal. As it is shown in Theorem~\ref{BorelAlexander}, the Alexander dual of a principal squarefree Borel ideal is squarefree Borel. Since $(I+J)^\vee=I^\vee\cap J^\vee$ for every squarefree monomial ideals $I$ and $J$, by squarefree version of \cite[Proposition 2.16]{FMS} one obtains that  $I(\Delta^{(d-k)})^\vee$ is a squarefree Borel ideal. Hence $L_k(\Delta)^{sq}$ is squarefree Borel.
\end{proof}
\medskip
In Proposition \ref{borel}, suppose that the cardinality of the Borel generators of $\B$ are not  the same.  Consider the simplicial complex $\Delta$  with $\F(\Delta)=\B'$ where $\B'$ is the set of maximal elements of $\B$, with respect to inclusion. Then the ideal $L_k(\Delta)^{sq}$ does not need to be  squarefree Borel. For example, if $\B=B(\{1,4\},\{1,2,3\})$, then $\F(\Delta)=\{\{1,4\},\{1,2,3\}\}$. However, the ideal $L_2(\Delta)^{sq}=(x_1x_2x_4,x_1x_3x_4)$ is not squarefree Borel.
Following the proof of Proposition \ref{borel}, one observes that anyway $I(\Delta^{(j)})^\vee$ is  always squarefree Borel ideal for all $j$ no matter what is the cardinality of the Borel generators.

\medskip
Let $\B$ be a  Borel set not necessarily principal, and let $\Delta$ be the simplicial complex with  $\F(\Delta)=\B$. In \cite{FMS}, the authors describe 1-covers of $\Delta$ by using Theorem~\ref{BorelAlexander} and the fact that $(I+J)^\vee=I^\vee\cap J^\vee$ for all squarefree ideals $I$ and $J$. With similar argument, one can use Proposition \ref{B-generators} to have squarefree $k$-covers of $\Delta$ when $\Delta$ is pure. More precisely, let $\F(\Delta)=B( F_1,\ldots,F_m)$ be a Borel set and $|F_i|=|F_j|$ for all $i,j$. By Lemma~\ref{skeleton}, $\F(\Delta^{(d-k)})$ is also a Borel set with the Borel generators as described in this lemma. Now using the above mentioned fact, i.e. $(I+J)^\vee=I^\vee\cap J^\vee$ for all squarefree ideals $I$ and $J$, and the fact $I(\Delta^{(d-k)})^\vee=L_k(\Delta)^{sq}$, one has the result in more general case.

For example, let $\Delta$ be the simplicial complex with $\F(\Delta)=B(\{1,4,5\},\{2,3,4\})$. We find the 2-covers of $\Delta$ as follows: for the simplicial complex $\Delta_1$ with $\F(\Delta_1)=\{1,4,5\}$, Proposition~\ref{B-generators} yields
\begin{eqnarray*}
L_2(\Delta_1)^{sq}&=&(x_H\:\; H\in B(\{1,2,3,4\},\{2,3,4,5\})\;)\\
&=&(x_1x_2x_3x_4,x_1x_2x_3x_5,x_1x_2x_4x_5,x_1x_3x_4x_5,x_2x_3x_4x_5),
\end{eqnarray*}
and similarly for the simplicial complex $\Delta_2$ with $\F(\Delta_2)=\{2,3,4\}$, one obtains
\begin{eqnarray*}
L_2(\Delta_2)^{sq}&=&(\{x_H\:\; H\in B(\{1,2,3\},\{2,3,4\})\})\\
&=&(x_1x_2x_3,x_1x_2x_4,x_1x_3x_4,x_2x_3x_4).\hspace{3.2cm}
\end{eqnarray*}
Hence
\begin{eqnarray*}
 L_2(\Delta)^{sq}&=&L_2(\Delta_1)^{sq}\cap L_2(\Delta_2)^{sq}\\
 &=&(x_1x_2x_3x_4,x_1x_2x_3x_5,x_1x_2x_4x_5,x_1x_3x_4x_5,x_2x_3x_4x_5).
\end{eqnarray*}
Observe that the generators of $B(\Delta)$ as described in Proposition~\ref{B-generators} are not necessarily the minimal ones.
\begin{Theorem}
\label{borel-generators}
Let $\B=B( F)$ be a principal Borel set with Borel generator  $F=\{i_1<i_2<\cdots < i_d\}$, and let $\Delta$ be the simplicial complex with  $\F(\Delta)=\B$. Then $B(\Delta)=A(\Delta)$.
\end{Theorem}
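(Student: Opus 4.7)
The strategy is to prove $A(\Delta) \subseteq B(\Delta)$ (the reverse inclusion is automatic) by showing that, for every $k$-cover $\cb$ of $\Delta = B(F)$, the monomial $x^{\cb}t^k$ lies in $B(\Delta)$. I proceed by induction on $\sum_{i=1}^{n} c_i$. If $\cb$ is squarefree then $x^{\cb}t^k$ is a generator of $B(\Delta)$ and there is nothing to prove.

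For the induction step, suppose $\cb$ is not squarefree. Set $A = \supp(\cb)$, $\ab = \chi_A$, and $j = \min_{H \in \F(\Delta)} |H \cap A|\geq 1$, so that $\ab$ is a squarefree $j$-cover of $\Delta$. Put $\bb = \cb - \chi_A$; this vector is nonnegative, nonzero, and has strictly smaller entry-sum than $\cb$. The crucial claim is the inequality
\[
 \sum_{h \in H} c_h \;\geq\; k + |H \cap A| - j \qquad \text{for every facet } H \in \F(\Delta),
\]
which rearranges to $\sum_{h \in H}(c_h - \chi_A(h)) \geq k - j$, showing that $\bb$ is a $(k-j)$-cover. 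With this in hand, $x^{\cb}t^k = x^{\chi_A}t^j \cdot x^{\bb}t^{k-j}$, where the first factor is a degree-$j$ generator of $B(\Delta)$ and the second lies in $B(\Delta)_{k-j}$ by induction (or in $S$ if $k = j$), so $x^{\cb}t^k \in B(\Delta)_k$, completing the induction.

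To prove the inequality I would exploit that $\Delta = B(F)$ is the basis complex of a matroid: the transversal matroid on $[n]$ presented by the nested family $[i_1] \subseteq [i_2] \subseteq \cdots \subseteq [i_d]$ (a $d$-subset of $[n]$ is a facet of $B(F)$ iff it admits a system of distinct representatives from this chain). Given any facet $H$ with $|H \cap A| = m > j$, pick a facet $H^{\ast}$ with $|H^{\ast} \cap A| = j$. Because $|H \cap A \setminus H^{\ast}| - |H^{\ast} \cap A \setminus H| = m - j > 0$, applying pigeonhole to Brualdi's perfect matching between $H \setminus H^{\ast}$ and $H^{\ast} \setminus H$ (whose edges are the valid single-element basis exchanges) produces a matched pair $(x,y)$ with $x \in H \cap A$ and $y \in H^{\ast} \setminus (H \cup A)$. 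Then $H - x + y$ is again a facet, with $|H \cap A|$ reduced by one and $\sum_{H-x+y} c = \sum_H c - c_x$ reduced by at least $1$ (since $c_x \geq 1$ as $x \in A$). Iterating this swap $m - j$ times produces a facet $H'$ with $|H' \cap A| = j$ and $\sum_{H'} c \geq k$, so $\sum_H c \geq \sum_{H'} c + (m - j) \geq k + (m - j)$, as required.

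The main obstacle is justifying this matroid exchange step: at each iteration one must guarantee a swap of an $A$-element for a non-$A$-element, which is precisely the pigeonhole on Brualdi's matching described above and is the combinatorial heart of the proof. Once this swap lemma is in hand the induction closes, yielding $A(\Delta) = B(\Delta)$.
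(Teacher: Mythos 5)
Your proposal follows the same overall strategy as the paper's proof: peel off a squarefree cover $\ab$ supported inside $\supp(\cb)$ and establish the key inequality $\sum_{h\in H}c_h\geq k+|H\cap A|-j$ for every facet $H$ by a descent on facets that trades one element of $H\cap A$ for an element outside $A$ at each step (this is exactly the inequality $\sum_{i\in G}c_i\geq k+|G\cap A|-r$ in the paper's Step~3). Where you differ is in the two choices that make this work. First, you take $A=\supp(\cb)$ with $j=\min_H|H\cap A|$, while the paper takes $A$ to be a subset of $\supp(\cb)$ of the special form described in Proposition~\ref{B-generators} with its order $r$ maximal; both choices are legitimate. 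Second, and more substantially, you justify the exchange step by observing that $\F(\Delta)=B(F)$ is the basis set of the transversal (Schubert/shifted) matroid presented by the chain $[i_1]\subseteq\cdots\subseteq[i_d]$ and then invoking Brualdi's matching theorem plus a pigeonhole count, whereas the paper carries out the exchange by hand with explicit Borel-order manipulations (its Steps~2 and~3). Your route is cleaner and more conceptual at the cost of importing two external facts --- that $B(F)$ is the set of bases of a matroid, and Brualdi's symmetric exchange matching --- both of which are standard; your pigeonhole count $|(H_i\setminus H^{\ast})\cap A|-|(H^{\ast}\setminus H_i)\cap A|=|H_i\cap A|-j>0$ is valid at every iteration, so the descent terminates at a facet $H'$ with $|H'\cap A|=j$ and $\sum_{H'}c\geq k$, and the argument goes through.

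One small gap: you never reduce to the case that $k$ is the maximal order of $\cb$, and without that reduction $j=\min_H|H\cap\supp(\cb)|$ may exceed $k$ (for instance $\cb=(2,1,\ldots,1)$ viewed as a $1$-cover has $j=d$), in which case $k-j<0$ and the factorization $x^{\cb}t^k=x^{\chi_A}t^j\cdot x^{\bb}t^{k-j}$ is meaningless. This is harmless but should be addressed: either assume at the outset that $k=\min_H\sum_{h\in H}c_h$ (as the paper does), which forces $j\leq k$, or observe that when $j\geq k$ the element $x^{\chi_A}t^k$ already lies in $B(\Delta)$ because a squarefree $j$-cover is in particular a squarefree $k$-cover, and then $x^{\cb}t^k=x^{\bb}\cdot x^{\chi_A}t^k\in B(\Delta)$ since $L_k(\Delta)$ is an ideal.
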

\begin{proof}
We show that for every non-squarefree cover $\cb$ of $\Delta$ of positive order, there exists a decomposition $\cb=\ab+\bb$ such that $\ab$ is a squarefree cover of $\Delta$. This will imply that $B(\Delta)=A(\Delta).$ So consider a non-squarefree $k$-cover $\cb$ of $\Delta$ with $k>0$ and let $C=\supp(\cb)$. We may assume that $k$ is the maximum order of $\cb$, that is, if $\ell> k$, then $\cb$ is not an $\ell$-cover. Indeed, assume that $k$ is the maximum order of $\cb$ and  $\cb=\ab+\bb$ is a decomposition of $\cb$ such that $\ab$ and $\bb$ are covers of $\Delta$ of orders $i$ and $j$, respectively, with $k=i+j$. Then for every $k'<k$, one can choose $i'\leq i$ and $j'\leq j$ such that $i'+j'=k'$. Hence $\cb=\ab+\bb$ can be also considered a decomposition of $\cb$ as a $k'$-cover.

We denote by $\A_{\ell}$  the Borel set $ B(\{q,q+1,\ldots i_{\ell+q-1}\}\:\;    q=1,\ldots,d-\ell+1)$, for $\ell=1,\ldots,d$. Then by Proposition~\ref{B-generators}, for every $H\in \A_{\ell}$, the (0,1)-vector $x^{\mathbf{h}}$ with $\supp(\mathbf{h})=H$ is a squarefree $\ell$-cover of $\Delta$.\medskip
\\
{\em Step 1} (Defining the cover $\ab$)\textbf{.}
Let
$$\T=\{H\subseteq C\:\; H\in \A_{\ell} \text{ for some } \ell \}.$$  Since $\cb$ is a cover of positive order, it is  a $1$-cover of $\Delta$ as well. Hence Theorem \ref{BorelAlexander} implies that $\T$ is nonempty. Let
$$r=\max\{\ell \:\; \text{there exists } H\in \T \text{ with } H\in \A_{\ell} \text{ for some } \ell\}.$$
Observe that $r\leq k$ because  $k$ is the maximum order of $\cb$. Let $A$ be an element of $\T$ such that $A\in \A_{r}$. Consider the vector $\ab$ with $\supp(\ab)=A$. Then the vector $\ab$ is an $r$-cover of $\Delta$. We will show later that $\bb=\cb-\ab$ is a $(k-r)$-cover of $\Delta$ to obtain the desired decomposition of $\cb$.
\medskip
\\
{\em Step 2.} If $r\neq d$, we observe that there exist at least $t$ entries $c_j$ of $\cb$ with $c_j=0$ where $j\leq i_{r+t}$ for all $t=1,\ldots d-r.$ In fact, if at most $t-1$  number of  entries $c_j$ with $j\leq i_{r+t}$ are zero, then there exists a subset $A'$ of $C$ which precedes $\{t,t+1,\ldots,i_{r+t}\}$. This means that $A'\in \A_{r+1}$, a contradiction to the choice of $r$.
\medskip
\\
{\em Step 3.}
We  show that $\bb=\cb-\ab$ is a $(k-r)$-cover of $\Delta$. If $r=d$, then $A=[i_d]$ and $\ab$ is the vector for which all entries are $1$. Thus $\bb$ is a $(k-r)$-cover of $\Delta$ because every facet of $\Delta$ is of cardinality $d$. Therefore, we have the desired decomposition of $\cb$. So we may assume that $r<d$.

 We consider a facet $G$ of $\Delta$, and show that $\sum_{i\in G}b_i\geq k-r$. For this purpose, attached to $G$, we inductively define a sequence $G=G_0,G_1,\ldots,G_m$ of facets of $\Delta$, where $m=|G\cap A |-r$,  with the following properties:
\begin{enumerate}
\item[(i)] $1+|G_i\cap A|=|G_{i-1}\cap A|$ for all $i=1,\ldots,m;$
\item[(ii)] $1+\sum_{j\in G_i}c_j\leq \sum_{j\in G_{i-1}}c_j $ for all $i=1,\ldots,m.$
\end{enumerate}
Assume that the facets $G_0,\ldots,G_{\ell}$ is already defined for some $\ell<m$. Let $G_{\ell}=\{j_1<\cdots<j_d\}$ and $j_s=\max\{j_i\in G_{\ell}\: \; j_i\in A\}$. By  property (i) and since $\ell<m=|G\cap A|-r$, one has $|G_{\ell}\cap A|>r$. Hence $s>r$. By step 2, since $\cb$ has at least $d$ zero entries, the set $\{i\:\; c_i=0 \text{ and } i\not\in G_{\ell}\}$ is nonempty. In fact, otherwise  $i\in G_\ell$ whenever $c_i=0$, and so $G_{\ell}\cap \supp(\cb) = \emptyset$, a contradiction. Let
$$t=\min\{i\:\; c_i=0 \text{ and } i\not\in G_{\ell}\}.$$
 We set $G_{\ell+1}=(G_{\ell}\setminus \{j_s\})\cup \{t\}$. Since $c_t=0$, one has $t\not\in A$, and so (i) holds for $G_{\ell+1}$. But $c_{j_s}\neq 0$ because $j_s\in A$. Thus (ii) also holds for $G_{\ell+1}$. So we only need to show $G_{\ell+1}$ is a facet of $\Delta$.  If $t<j_s$, then $G_{\ell+1}$ is a facet of $\Delta$ by the definition of the Borel sets. Furthermore, since $t\not\in G_{\ell}$, one has $t\neq j_s$. So assume that $t>j_s$ and
$i_{s+q}<t\leq i_{s+q+1}$. Then for each $q'$ where $q'=1,\ldots,q+1$, we have $j_{s+q'}\leq i_{s+q'-1}$. By contrary, assume that $j_{s+q'}> i_{s+q'-1}$ for some $q'=1,\ldots,q$. Then exactly $s+q'-1$ elements of $G_{\ell}$ belongs to the set $[i_{s+q'-1}]$. On the other hand, by Step 2 there exist $s+q'-1$ entries $c_i=0$ with $i\in [i_{s+q'-1}]$. So by the choice of $t$, we would have $c_i=0$ for $i=j_1,\ldots,j_{s+q'-1}$ which implies $\{j_1,\ldots,j_{s+q'-1}\}\cap A=\emptyset$, a contradiction. Thus $j_{s+q'}\leq i_{s+q'-1}$ for all $q'=1,\ldots,q+1$, and hence the set $\{j_1<\cdots<\hat{j}_s<\cdots < j_{s+q+1}\}$ precedes $\{i_1<\cdots< i_{s+q}\}.$ Let $E_1=\{j'_{s+q+1}<\cdots<j'_d\}$ equals the set $\{t,j_{s+q+2},\ldots,j_d\}$. Considering the fact $t\leq i_{s+q+1}$, it follows that $E_1$ precedes $\{i_{s+q+1},\ldots , i_d\}$ because if $j_{q'}<t$,  then $j_{q'}\leq i_{q'-1}$ for $q'=s+q+2,\ldots,d$. Therefore, $G_{\ell+1}=\{j_1<\cdots<\hat{j_s}<\cdots < j_{s+q+1}<j'_{s+q+1}<\cdots<j'_d\}$ precedes $F$  which means $G_{\ell+1}$ is a facet of $\Delta$, as desired.

Now using the property (ii) of the sequence $G_0,\ldots,G_m$, we obtain
$$\sum_{i\in G}c_i\geq \sum_{i\in G_m}c_i+m \geq k+m=k+|G\cap A |-r.$$
The second above inequality holds because $\cb$ is a $k$-cover of $\Delta$. So
$$\sum_{i\in G}b_i=\sum_{i\in G}c_i-\sum_{i\in G}a_i\geq (k+|G\cap A |-r)-|G\cap A |=k-r.$$
Thus $\bb$ is a $(k-r)$-cover of $\Delta$, and this means that $\cb=\ab+\bb$ is the desired decomposition of $\cb$.
\end{proof}

The statement of Theorem \ref{borel-generators} does not hold for the Borel sets in general. Once more consider the example after Proposition \ref{borel} where $\Delta$ is the simplicial complex with $\F(\Delta)=B(\{1,4,5\},\{2,3,4\})$. Then $\cb=(2,1,1,1,0)$ is a 3-cover of $\Delta$. We have already known the squarefree 2-covers of $\Delta$ and one can also find the squarefree 1-covers and 3-cover of $\Delta$ by Proposition~\ref{B-generators} in order to see that $\cb$ cannot be decomposed to squarefree covers of $\Delta$. Hence $B(\Delta)\neq A(\Delta).$

\begin{Corollary}
Let $\B=B( F)$ be a principal Borel set with Borel generator  $F=\{i_1<i_2<\cdots < i_d\}$, and let $\Delta$ be the simplicial complex with  $\F(\Delta)=\B$.
Then $B(\Delta^{(j)})=A(\Delta^{(j)})$ for every $j=0,\ldots,d-1$.
\end{Corollary}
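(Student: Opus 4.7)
The plan is to reduce the corollary to the already-established Theorem~\ref{borel-generators} by showing that the facet set of each skeleton $\Delta^{(j)}$ is itself a principal Borel set. Since $\B = B(F)$ is principal with $F = \{i_1 < i_2 < \cdots < i_d\}$, every element of $\B$ has cardinality $d$, so $\Delta$ is pure of dimension $d - 1$.

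First I would invoke Lemma~\ref{skeleton} with $m = 1$ and $q = j$. For every $j \in \{0, \ldots, d-1\}$ we have $d > j$, so the lemma applies in its first case and asserts that
\[
\F(\Delta^{(j)}) = B\bigl(\{i_{d-j}, i_{d-j+1}, \ldots, i_d\}\bigr),
\]
which is again a \emph{principal} Borel set. In particular its Borel generator has cardinality $j + 1$, so $\Delta^{(j)}$ is pure of dimension $j$.

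Having identified $\F(\Delta^{(j)})$ as a principal Borel set, I would directly apply Theorem~\ref{borel-generators} to the simplicial complex $\Delta^{(j)}$ (with Borel generator $\{i_{d-j} < i_{d-j+1} < \cdots < i_d\}$) to conclude that $B(\Delta^{(j)}) = A(\Delta^{(j)})$, as claimed. No obstacle arises here: once Lemma~\ref{skeleton} is available, the stability of the principal Borel property under passage to skeletons makes the corollary an immediate consequence of Theorem~\ref{borel-generators} applied one skeleton at a time.
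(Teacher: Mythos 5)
Your proposal is correct and follows exactly the paper's own argument: the paper likewise notes that by Lemma~\ref{skeleton} the set $\F(\Delta^{(j)})$ is a principal Borel set (with generator $\{i_{d-j}<\cdots<i_d\}$) and then applies Theorem~\ref{borel-generators}. Your write-up simply spells out the instantiation $m=1$, $q=j$ in a bit more detail.
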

\begin{proof}
It is enough to notice that by Lemma~\ref{skeleton}, the set $\F(\Delta^{(j)})$ is a  principal Borel set. Hence Theorem~\ref{borel-generators} implies the result.
\end{proof}

An immediate consequence of Theorem \ref{borel-generators} is the following result in \cite[Proposition 4.6]{HHT}.
\begin{Corollary}
Let $\Sigma_n$ denote the simplex of all subsets of $[n]$. Then the $S$-algebra $A(\Sigma_n^{(d-1)})$ is minimally generated by the monomials $x_{j_1}x_{j_2}\cdots x_{j_{n-d+k}}t^k $, where $k=1,\ldots,d$ and $1\leq j_1<j_2<\cdots <j_{n-d+k}\leq n$.
\end{Corollary}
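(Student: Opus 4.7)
My plan is to identify $\F(\Sigma_n^{(d-1)})$ as a principal Borel set and then invoke Theorem~\ref{borel-generators} together with Proposition~\ref{B-generators}. The facets of $\Sigma_n^{(d-1)}$ are precisely the $d$-subsets of $[n]$, and every such subset $\{h_1<\cdots<h_d\}$ precedes $F:=\{n-d+1,n-d+2,\ldots,n\}$ in the Borel order because $h_s\leq n-(d-s)=n-d+s$. Hence $\F(\Sigma_n^{(d-1)})=B(F)$ is principal Borel with $i_q=n-d+q$ for $q=1,\ldots,d$, and Theorem~\ref{borel-generators} immediately yields $A(\Sigma_n^{(d-1)})=B(\Sigma_n^{(d-1)})$. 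It therefore suffices to analyze the squarefree algebra.

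Next I would apply Proposition~\ref{B-generators}: the generators of $B(\Sigma_n^{(d-1)})$ in degree $k$ are the monomials $x_Ht^k$ where $H$ belongs to the Borel set with generators $\{q,q+1,\ldots,n-d+k+q-1\}$ for $q=1,\ldots,d-k+1$. Each of these Borel generators has cardinality $n-d+k$, so each such $H$ does too. The combinatorial claim I would verify is that this Borel set exhausts $\binom{[n]}{n-d+k}$: given $H=\{h_1<h_2<\cdots<h_{n-d+k}\}$, the bound $h_s\leq d-k+s$ (deduced from $h_{n-d+k}\leq n$) shows that $q:=\max_s(h_s-s+1)$ lies in $\{1,\ldots,d-k+1\}$, and then $H\prec\{q,q+1,\ldots,n-d+k+q-1\}$. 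This recovers exactly the list of monomials in the statement.

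For minimality, since $A(\Delta)=B(\Delta)$ its minimal $S$-algebra generators correspond to indecomposable covers, so I must check that the squarefree $k$-cover $\cb$ with support an $(n-d+k)$-subset $H$ is indecomposable. If $\cb=\ab+\bb$ with $\ab,\bb$ of positive orders $i,j$ summing to $k$, squarefreeness of $\cb$ forces $\supp(\ab)$ and $\supp(\bb)$ to partition $H$, while for $\ab$ to be an $i$-cover of $\Sigma_n^{(d-1)}$ (every $d$-subset meets $\supp(\ab)$ in at least $i$ vertices) the complement $[n]\setminus\supp(\ab)$ may contain at most $d-i$ elements, giving $|\supp(\ab)|\geq n-d+i$, and similarly $|\supp(\bb)|\geq n-d+j$. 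Adding these estimates yields $n-d+k=|H|\geq 2(n-d)+k$, i.e.\ $n\leq d$, contradicting the assumption $n>d$ implicit in the statement. The main obstacle I anticipate is the middle step — explicitly matching the Borel set of Proposition~\ref{B-generators} with all of $\binom{[n]}{n-d+k}$ while keeping the double indexing in $k$ and $q$ straight; once that identification is in hand, the indecomposability check is a clean transversal estimate.
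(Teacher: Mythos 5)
Your argument is correct and follows essentially the same route as the paper: identify $\F(\Sigma_n^{(d-1)})$ as the principal Borel set $B(\{n-d+1,\ldots,n\})$, invoke Theorem~\ref{borel-generators} and Proposition~\ref{B-generators}, observe that the resulting Borel sets exhaust all $(n-d+k)$-subsets, and rule out decompositions by a cardinality count forcing $n\leq d$. Your minimality step is phrased via support sizes of covers ($|\supp(\ab)|\geq n-d+i$) rather than the paper's direct degree count $|H_1|+|H_2|\leq|H|$, but since the listed generators have support of size exactly $n-d+k_i$ this is the same estimate, and your explicit verification that the Borel set in degree $k$ equals $\binom{[n]}{n-d+k}$ fills in a detail the paper merely asserts.
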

\begin{proof}
Consider the Borel set $\B=B(\{n-d+1,n-d+2,\ldots,n\})$. Then $\F(\Sigma_n^{(d-1)})=\B$. Thus by Proposition ~\ref{B-generators} and Theorem~\ref{borel-generators}, the $S$-algebra $A(\Sigma_n^{(d-1)})$ is generated by the monomials $x_Ht^k$ where
\[
H\in B(\{1,2,\ldots,n-d+k\},\{2,3,\ldots,n-d+k+1\},\ldots,\{d-k+1,d-k+2,\ldots,n\}),
\]
for $k=1,\ldots,d$. The above mentioned set is exactly the set of all subsets of $[n]$ of cardinality $n-d+k$. On the other hand, by these monomials we have a minimal system of generators. In fact for the case that $n=d$ there is nothing to prove, and for the case that $n>d$  by contrary, assume that for a generator $x_Ht^k$, there exist monomials $x_{H_1}t^{k_1}$ and $x_{H_2}t^{k_2}$ in this set such that $x_{H_1}x_{H_2}|x_H$ and $k_1+k_2=k$. Hence we have
\[
(n-d+k_1)+(n-d+k_2)=|H_1|+|H_2|\leq |H|=(n-d+k).
\]
Thus $n\leq d$, a contradiction.
\end{proof}

The following proposition exhibit a condition which guarantees the existence of a generator of degree $\dim(\Delta)+1$ in the minimal set of monomial generators of $A(\Delta)$.
\begin{Proposition}
\label{higher-generator}
Let $\B=B( F)$ be a  principal Borel set with Borel generator  $F=\{i_{1}<i_{2}<\cdots < i_{d}\}$,  and let $\Delta$ be the simplicial complex with  $\F(\Delta)=\B$. Then $x_1x_2\cdots x_{i_d}t^d$ belongs to the minimal set of monomial generators of $A(\Delta)$ if and only if  $i_1\neq 1$.
\end{Proposition}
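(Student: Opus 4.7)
The key observation is that Proposition~\ref{B-generators} applied at $k=d$ leaves only the index $q=1$ with Borel generator $\{1,2,\ldots,i_d\}$, and the only $i_d$-subset that precedes $\{1,2,\ldots,i_d\}$ is itself; combined with $A(\Delta)=B(\Delta)$ from Theorem~\ref{borel-generators}, this forces the unique degree-$d$ monomial generator of $A(\Delta)$ to be $x_1x_2\cdots x_{i_d}t^d$, corresponding to the all-ones squarefree $d$-cover $\cb$ on $[i_d]$. Hence the question reduces to whether $\cb$ is indecomposable as a $d$-cover.

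The easy direction is when $i_1=1$: the Borel-precedence condition $j_1\leq i_1$ then forces $1\in G$ for every facet $G$, so the indicator $e_1$ of $\{1\}$ is a $1$-cover and $\cb-e_1$ is a $(d-1)$-cover of $\Delta$, yielding a decomposition.

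For the converse, assume $i_1\geq 2$ and suppose towards a contradiction that $\cb=\ab+\bb$ with $\ab$ an $i$-cover and $\bb$ a $(d-i)$-cover for some $1\leq i\leq d-1$. Since $\cb$ is $(0,1)$-valued on $[i_d]$, the vectors $\ab$ and $\bb$ are squarefree with disjoint supports $A,B$ partitioning $[i_d]$, and combining $|G\cap A|\geq i$, $|G\cap B|\geq d-i$, $|G|=d$ gives $|G\cap A|=i$ for every facet $G$. The crucial consequence is an invariance: whenever two facets satisfy $G_2=(G_1\setminus\{j\})\cup\{j'\}$, one has $j\in A\iff j'\in A$.

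To finish I will exhibit, using the facet $F=\{i_1,\ldots,i_d\}$ (which avoids $1$ because $i_1\geq 2$), a chain of such one-element swaps connecting every vertex of $[i_d]$ to the vertex $1$. For each $i_s\in F$, the set $(F\setminus\{i_s\})\cup\{1\}$ sorts to $\{1,i_1,\ldots,\widehat{i_s},\ldots,i_d\}$, which plainly precedes $F$ and hence is a facet, giving $i_s\sim 1$. For $j\in[i_d]\setminus(F\cup\{1\})$, choose $s$ with $i_{s-1}<j<i_s$ (setting $i_0:=0$); then $(F\setminus\{i_s\})\cup\{j\}$ is a facet, and swapping $j$ for $1$ inside this facet again lands in $\B$, yielding $j\sim 1$. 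Thus $[i_d]$ is a single equivalence class, which forces $A$ or $B$ to be empty and contradicts $i,d-i\geq 1$. The main obstacle in executing this plan is the careful Borel-precedence bookkeeping after each swap, but once the sortings are written out these checks are routine.
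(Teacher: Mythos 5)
Your proof is correct, and the substantive direction ($i_1\neq 1$ implies $x_1\cdots x_{i_d}t^d$ is a minimal generator) is argued by a genuinely different route than the paper's. The paper proceeds by contraposition through Proposition~\ref{B-generators}: from a decomposition $\cb=\ab+\bb$ into an $r$-cover and a $(d-r)$-cover it identifies the supports $A$ and $B$ with sets preceding the Borel generators $\{l-r+1,\ldots,i_l\}$ and $\{j-d+r+1,\ldots,i_j\}$, and the cardinality count $|A|+|B|=i_d$ then forces $i_j=j$, i.e.\ $i_1=1$. You instead upgrade the inequalities $|G\cap A|\geq i$, $|G\cap B|\geq d-i$, $|G|=d$ to the equality $|G\cap A|=i$ for every facet $G$, deduce the exchange invariance $j\in A\iff j'\in A$ whenever two facets differ by a single swap, and show the exchange graph on $[i_d]$ is connected (every vertex reaches $1$ in at most two swaps through $F$, using $1\notin F$), so $A$ or $B$ must be empty. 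This is more elementary and self-contained: it bypasses the description of squarefree covers entirely, and indeed your preliminary appeal to Proposition~\ref{B-generators} and Theorem~\ref{borel-generators} is not really needed, since the statement is exactly the indecomposability of the all-ones $d$-cover. Two small points to tidy up. First, the paper's notion of decomposition also permits $\cb=\ab+\bb$ with $\ab$ a nonzero $0$-cover and $\bb$ a $d$-cover, so you should not silently restrict to $1\leq i\leq d-1$; your own counting disposes of this case, since $|G\cap B|\geq d=|G|$ for every facet forces $B\supseteq\bigcup_{G\in\F(\Delta)}G=[i_d]$ and hence $A=\emptyset$. Second, in the easy direction your decomposition via the $1$-cover supported on $\{1\}$ is a clean simplification of the paper's split $\{1,\ldots,j\}\cup\{j+1,\ldots,i_d\}$ at an index with $i_j=j$.
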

\begin{proof}
First observe that $i_1\neq 1$ if and only if $i_j\neq j$ for all j. So we show that the $d$-cover $\cb$ of $\Delta$ with $c_i=1$ for all $i$, is decomposable if and only if $i_j=j$ for some $j$. First, suppose that $\cb$ is decomposable and $\cb=\ab+\bb$ is a decomposition of $\cb$. Since $\Delta$ is pure of dimension $d-1$, the vector $\cb$ is the only squarefree $d$-cover of $\Delta$. Therefore, the order of $\ab$ and $\bb$ is nonzero. Let the order of $\ab$  be $r$ and the order of $\bb$ be $d-r$. Since $\cb$ does not have a decomposition to a 0-cover and a $d$-cover, the same is true for the covers $\ab$ and $\bb$. Hence if $A=\supp(\ab)$ and $B=\supp(\bb)$, then they are  in the form described in Proposition~\ref{B-generators}. In other words, there exist numbers $j$ and $l$ such that  $A$ precedes  $\{l-r+1,l-r+2,\ldots,i_l\}$ and $B$ precedes  $\{j-d+r+1, j-d+r+2, \ldots, i_j\}$. Observe that $[i_d]$ is the disjoint union of  $A$ and $B$ because $\cb=\ab+\bb$. Thus we may assume that $i_d\in A$ which implies $l=d$. Moreover, we see that $|A|+|B|= i_d$. For this reason, since $|A|=i_d-(d-r+1)+1$ and $|B|=i_j-(j-d+r+1)+1$, we obtain $i_j=j$.

Conversely, let $i_j=j$ for some $j$. By Theorem~\ref{B-generators}, the monomial $x_At^j\in L_j(\Delta)$ where $A=\{1,2,\ldots,i_j=j\}$ and  the monomial $x_Bt^{d-j}\in L_{d-j}$ where $B=\{j+1,j+2,\ldots,i_d\}$. Let $\ab$ and $\bb$ be the vectors with $A=\supp(\ab)$ and $B=\supp(\bb)$. Then we have the decomposition  $\cb=\ab+\bb$ of the cover $\cb$.
\end{proof}

\section*{Acknowledgements}
This paper was completed while the first author was visiting Universit\"at Duisburg-Essen, Campus Essen. The authors would like to thank  Professor J\"{u}rgen Herzog
for his hospitality, support, and stimulating discussions.

{}

\end{document}